\documentclass[reqno,12pt]{amsart}
\usepackage[colorlinks=true, linkcolor=blue, citecolor=blue]{hyperref}
    
\usepackage{amssymb}
\usepackage{amsmath, graphicx, rotating}
\usepackage{color}
\usepackage{soul}
\usepackage[dvipsnames]{xcolor}
               
%for todo        
\usepackage{ifthen}      
\usepackage{xkeyval}
\usepackage{todonotes}     
\setlength{\marginparwidth}{3cm}
  
\usepackage[T1]{fontenc}
\usepackage{lmodern}
\usepackage[english]{babel}

\usepackage{ upgreek }
\usepackage{stmaryrd}
\SetSymbolFont{stmry}{bold}{U}{stmry}{m}{n}
\usepackage{amsthm}
\usepackage{float}

\usepackage{ bbm }
\usepackage{ stmaryrd }
\usepackage{ mathrsfs }
\usepackage{ frcursive }
\usepackage{ comment }

\usepackage{pgf, tikz}
\usetikzlibrary{shapes}
\usepackage{varioref}
\usepackage{enumitem}

\usepackage{mathtools}  

\usepackage{dsfont}

\setcounter{MaxMatrixCols}{10}
\definecolor{rouge}{rgb}{0.7,0.00,0.00}
\definecolor{vert}{rgb}{0.00,0.5,0.00}
\definecolor{bleu}{rgb}{0.00,0.00,0.8}

%%%%%%%%%%%%%%%%%%%%%%%%%%%%
% pagesize
%\usepackage[textheight=7.61in,textwidth=4.98in]{geometry} % forAOP
%%\usepackage[textheight=7.5in,textwidth=4.98in]{geometry}
%\oddsidemargin=1in
%\evensidemargin=1in
%\hoffset=-0.25in
\usepackage[margin=1.2in]{geometry}

\newtheorem{theorem}{Theorem}[section]
\newtheorem*{theorem*}{Theorem}
\newtheorem{lemma}[theorem]{Lemma}

\labelformat{hypothesis}{\textbf{M\kern-0.1mm#1}}

\newtheorem{condition}{Condition}
\newtheorem{conditionA}{A\kern-0.1mm}
\labelformat{conditionA}{\textbf{A\kern-0.1mm#1}}

%%%%%%%%%%%%%%%%%
% make compact dots
\renewcommand\dots{\hbox to 1em{.\hss.\hss.}}

\theoremstyle{definition}

\numberwithin{equation}{section}

\def\bb#1{\mathbb{#1}}

\def\scr#1{\mathscr{#1}}

\def\geq{\geqslant}
\def\leq{\leqslant}

\newcommand{\ee}{\varepsilon}

\DeclareMathOperator{\supp}{supp}

\DeclarePairedDelimiter\floor{\lfloor}{\rfloor}

%%%%%%%%%%%%%%%%%%%%%%%%%%%%%%%%%%%%%%%%%%%%%%%%%%%%%%%%%%%
\begin{document}

\title[Edgeworth expansion for coefficients]
{Edgeworth expansion for the coefficients of random walks on the general linear group}

%\title[Moderate deviations for products of random matrices]
%{Law of large numbers and moderate deviations for the coefficients of random walks on the general linear group}

\author{Hui Xiao}
\author{Ion Grama}
\author{Quansheng Liu}

%\curraddr[Xiao, H., Grama, I., Liu, Q.]{ Universit\'{e} de Bretagne-Sud, LMBA UMR CNRS 6205, Vannes, France.}
\curraddr[Xiao, H.]{ Universit\'{e} de Bretagne-Sud, LMBA UMR CNRS 6205, Vannes, France}
\email{hui.xiao@univ-ubs.fr}
\curraddr[Grama, I.]{ Universit\'{e} de Bretagne-Sud, LMBA UMR CNRS 6205, Vannes, France}
\email{ion.grama@univ-ubs.fr}
\curraddr[Liu, Q.]{ Universit\'{e} de Bretagne-Sud, LMBA UMR CNRS 6205, Vannes, France}
\email{quansheng.liu@univ-ubs.fr}

%%%%%%%%%%%%%%%%%%%%%%%%%%%%%%%%%%%%%%%%%%%%%%%%%%%%%%%%

\begin{abstract}
Let $(g_n)_{n\geq 1}$ be a sequence of independent and identically distributed 
random elements with law $\mu$ on the general linear group $\textup{GL}(V)$, where $V=\bb R^d$. 
Consider the random walk $G_n : = g_n \ldots g_1$, $n \geq 1$.
Under suitable conditions on $\mu$, we establish the first-order Edgeworth expansion
for the coefficients $\langle f, G_n v \rangle$ with $v \in V$ and $f \in V^*$, 
in which a new additional term appears compared to the case of vector norm $\|G_n v\|$. 
\end{abstract}

\date{\today}
\subjclass[2010]{Primary 60F05, 60F15, 60F10; Secondary 37A30, 60B20}
\keywords{Random walks on groups; coefficients;  central limit theorem;  Edgeworth expansion; Berry-Esseen bound}

\maketitle

%\tableofcontents

%%%%%%%%%%%%%%%%%%%%%%%%%%%%%%%%%%%%%%%%%%%%%%%%%%%%%%%
%%%%%%%%%%%%%%%%%%%%%%%%%%%%%%%%%%%%%%%%%%%%%%%%%%%%%%%

\section{Introduction} 
%\subsection{Background and main objective}
Since the pioneering work of Furstenberg and Kesten \cite{FK60}, 
the study of random walks on linear groups %(also called products of random matrices)
has attracted a great deal of attention, 
see for instance the work of Le Page \cite{LeP82}, 
Guivarc'h and Raugi \cite{GR85},  Bougerol and Lacroix \cite{BL85}, Goldsheid and Margulis \cite{GM89},
Benoist and Quint \cite{BQ16b}, and the references therein. 
%This theory has important applications in a number of research areas such as 
%spectral theory \cite{BL85, CL90, BG12},  %of random Schr\"{o}dinger operators \cite{BL85, CL90}, 
%geometric measure theory \cite{PT08, HS17, GK21}, 
%statistical physics \cite{CPV93},   
%homogeneous dynamics \cite{BFLM11, BQ13},
%stochastic recursions and smoothing transforms \cite{Kes73, GL16, Men16},
%and branching processes in random environment \cite{LPP18, GLP20a}.
Of particular interest is the study of asymptotic properties of the random walk
$G_n : = g_n \ldots g_1$, $n \geq 1$, 
%\begin{align*}
%G_n : = g_n \ldots g_1,  \quad  n \geq 1, 
%\end{align*}
where $(g_n)_{n \geq 1}$ is a sequence of independent and identically distributed
random elements with law $\mu$ on the general linear group $\textup{GL}(V)$ with $V = \bb R^d$. 
One natural and important way to describe the random walk $(G_n)_{n\geq 1}$ is to  
investigate the growth rate of the coefficients $\langle f, G_n v \rangle$, 
where $v \in V$, $f \in V^*$ and $\langle \cdot, \cdot \rangle$ is the duality bracket: $\langle f, v \rangle = f(v)$. 
Bellman \cite{Bel54} conjectured that the classical central limit theorem 
should hold true for $\langle f, G_n v \rangle$ 
in the case when $g_n$ are positive matrices. 
This conjecture was proved by Furstenberg and Kesten \cite{FK60},
who established the strong law of large numbers and central limit theorem 
under the condition that the matrices $g_n$ are strictly positive and that all the coefficients of $g_n$ are comparable. 
%This condition was then relaxed by 
For further developments we refer to Kingman \cite{Kin73}, 
Cohn, Nerman and Peligrad \cite{CNP93}, Hennion \cite{Hen97}.

As noticed  by Furstenberg \cite{Fur63}, 
the analysis developed in \cite{FK60} for positive matrices breaks down for invertible matrices.  
It turns out that the situation of invertible matrices is much more complicated and delicate. 
Guivarc'h and Raugi \cite{GR85} established the strong law of large numbers for the coefficients of products 
of invertible matrices
under an exponential moment condition: 
for any $v \in V \setminus \{0\}$ and $f \in V^* \setminus \{0\}$, 
\begin{align}\label{Ch7_SLLN_Entry0a}
\lim_{n\to\infty} \frac{1}{n} \log | \langle f, G_n v \rangle | = \lambda \quad  \mbox{a.s.}, 
\end{align}
where $\lambda \in \bb R$ is a constant independent of $f$ and $v$, called the first Lyapunov exponent of $\mu$.  
It is worth mentioning that the result \eqref{Ch7_SLLN_Entry0a} does not follow from
the classical subadditive ergodic theorem of Kingman \cite{Kin73}, nor from its recent version by Gou\"ezel and Karlsson \cite{GK20}.  
The central limit theorem for the coefficients has also been established in \cite{GR85}  
under the exponential moment condition: 
if $\int_{ \textup{GL}(V) }  N(g)^{\ee} \mu(dg) < \infty$ with $N(g) = \max \{ \|g\|, \| g^{-1} \| \}$ for some $\ee > 0$, 
then for any $t \in \bb R$, 
\begin{align}\label{Ch7_CLT_Entry0a}
\lim_{n \to \infty} \bb{P} \left( \frac{ \log | \langle f, G_n v \rangle | 
 - n \lambda}{ \sigma \sqrt{n} } \leq t  \right)  =  \Phi(t), 
\end{align}
where $\Phi$ is the standard normal distribution function on $\bb R$
and $\sigma^2 > 0$ is the asymptotic variance of $ \frac{1}{\sqrt n} \log | \langle f, G_n v \rangle |.$  
Recently, using the martingale approximation method, Benoist and Quint \cite{BQ16} have improved \eqref{Ch7_CLT_Entry0a}
by relaxing the exponential moment condition to the optimal second moment 
$\int_{ \textup{GL}(V) }  (\log N(g))^{2} \mu(dg) < \infty$.   
%and by extending it 
%to the framework of the general linear group $\textup{GL}(V)$ with $V = \bb K^d$,
%where $\bb{K}$ is a local field. 
%Moreover, in \cite{BQ16b} they also established the law of iterated logarithm and large deviation bounds.

An important and interesting problem is the estimation of the rate of convergence in \eqref{Ch7_CLT_Entry0a}.
Very recently, under the exponential moment condition,  Cuny, Dedecker, Merlev\`ede and Peligrad \cite{CDMP21b} 
established a rate of convergence of order $\log n/\sqrt{n}$.
Dinh, Kaufmann and Wu \cite{DKW21, DKW21b} improved this result by giving the optimal rate $1/\sqrt{n}$
under the same exponential moment assumption:
there exists a constant $c > 0$ such that for all $n \geq 1$, $t \in \bb R$, 
$v \in V$ and $f \in V^*$  with $\|v\| = \|f\| =1$, 
\begin{align} \label{BerryEsseen_Coeffaa-Intro}
\left|  \bb{P} \left(  \frac{\log |\langle f, G_n v \rangle| - n \lambda }{ \sigma \sqrt{n} } \leq t    \right)
-  \Phi(t)  \right|  \leq  \frac{c }{\sqrt{n}}. 
\end{align}

The objective of this paper is to further elaborate on the central limit theorem \eqref{Ch7_CLT_Entry0a}
by establishing the first-order Edgeworth expansion for the coefficients  
%investigate the rate of convergence in the central limit theorem \eqref{Ch7_CLT_Entry0a}.
%To this end, we first establish a Berry-Esseen bound 
under the exponential moment condition. We prove that 
%if $\int_{ \textup{GL}(V) }  N(g)^{\ee} \mu(dg) < \infty$ with $N(g) = \max \{ \|g\|, \| g^{-1} \| \}$ for some $\ee > 0$, then, 
as $n\to \infty$,  
uniformly in $t \in \bb R$, $x=\bb R v \in \bb P(V)$ and $ y = \bb R f \in \bb P(V^*)$ with $\|v\| = \|f\| =1$,    
\begin{align}\label{Edgeworth-Coeff-Intro}
& \bb{P} \left(  \frac{\log |\langle f, G_n v \rangle| - n \lambda }{ \sigma \sqrt{n} } \leq t    \right)   \notag\\
 &   =   \Phi(t) + \frac{\Lambda'''(0)}{ 6 \sigma^3 \sqrt{n}} (1-t^2) \phi(t)
  - \frac{ b_{1}(x) + d_{1}(y) }{ \sigma \sqrt{n} } \phi(t)    +  o \Big( \frac{ 1 }{\sqrt{n}} \Big),   
\end{align}
where $\phi$ denotes the standard normal density,  $\Lambda'''(0)$, $b_{1}(x)$, $d_{1}(y)$ 
are  defined in Section \ref{Sec-main-result}. 
Notice that the asymptotic bias terms $b_{1}(x)$ and $d_{1}(y)$ are new compared with the classical Edgeworth expansion
for sums of independent real random variables \cite{Pet75}; 
 $d_{1}(y)$ is also new compared with the Edgeworth expansion for the vector norm $\|G_n v \|$ \cite{XGL19b}.
 % and any $\gamma$-H\"older continuous function $\varphi$ on the projective space $\bb P(V)$, 
%\begin{align}\label{BerryEsseen_Coeffaa-Intro}
%\left|  \bb{E} \left[ \varphi(G_n \!\cdot\! x) 
%\mathds{1}_{ \big\{ \frac{\log |\langle f, G_n v \rangle| - n \lambda }{ \sigma \sqrt{n} } \leq t  \big\}  }   \right]
%- \nu (\varphi) \Phi(t)  \right|  \leq  \frac{c}{\sqrt{n}} \|\varphi\|_{\gamma}, 
%\end{align}
%where $G_n \!\cdot\! x = \bb R G_n v$ with $x = \bb R v \in \bb P(V)$,
%$\nu$ is the unique invariant measure of the Markov chain $(G_n \cdot\! x)_{n \geq 0}$, 
%and $\|\varphi\|_{\gamma}$ is the $\gamma$-H\"older norm of the function $\varphi$. 
%The expansion \eqref{Edgeworth-Coeff-Intro} is a refinement of the central limit theorem \eqref{BerryEsseen_Coeffaa-Intro}. 
In fact, we will establish a stronger result, that is, the first-order Edgeworth expansion 
for the couple $(\varphi(G_n \!\cdot\! x), \log |\langle f, G_n v \rangle|)$ with a target function $\varphi$ on $\bb P(V)$, 
cf.\ Theorem \ref{Thm-Edge-Expan-Coeff001}.
%Moreover, under a subexponential moment condition, we prove that a weaker version of the Berry-Esseen bound 
%holds true. Namely, the bound \eqref{BerryEsseen_Coeffaa-Intro} holds with the rate $\frac{c \log^{\beta} n }{\sqrt{n}}$
%(for some $\beta > 0$) instead of $\frac{c}{\sqrt{n}}$. 
Moreover, we prove a similar result under the changed measure, which can be useful for studying 
moderate deviations with explicit rates of convergence. 
Clearly, the expansion \eqref{Edgeworth-Coeff-Intro} implies the Berry-Esseen bound  \eqref{BerryEsseen_Coeffaa-Intro}.

The proof of the Edgeworth expansion for the coefficient $\langle f, G_n v \rangle$  
turns out to be much more complicated than that for the norm cocycle $\sigma (G_n, x)= \log \frac{\| G_n v \|}{\|v\|}$, 
$x = \bb R v \in \bb P(V)$ recently established in \cite{XGL19b}. 
One of the difficulties is that % the $\log$ of the coefficient 
$\log | \langle f, G_n v \rangle |$ is not a cocycle and cannot be studied with the same approach as 
$\sigma (G_n, x)$.
Our starting point is the following decomposition
which relates the coefficient to the norm cocycle: 
for any $x = \bb R v \in \bb P(V)$ and $y = \bb R f \in \bb P(V^*)$ with $\|f\|=1$, 
\begin{align}\label{Ch7_Intro_Decom0a}
\log |\langle f, G_n v \rangle| = \sigma (G_n, x) +  \log \delta(G_n \!\cdot\! x, y),  
\end{align}
where  $\delta(x, y) = \frac{|\langle f, v \rangle|}{\|f\| \|v\|}$. 
For the proof of the Edgeworth expansion \eqref{Edgeworth-Coeff-Intro}, 
we first use a partition $(\chi_{n,k}^y)_{k \geq 1}$ of the unity  to discretize the component $\log \delta(G_n \!\cdot\! x, y)$ in \eqref{Ch7_Intro_Decom0a}.
This allows us to reduce the study of the coefficient to that of the couple formed by norm cocycle $\sigma(G_n, x)$ 
and the target function $\chi_{n,k}^y(G_n \!\cdot\! x)$. 
It turns out that the Edgeworth expansion 
for the couple $ ( \chi_{n,k}^y  (G_n \!\cdot\! x), \sigma(G_n, x))$
%$(G_n \!\cdot\! x, \sigma(G_n, x))$ 
%with target functions on the Markov chain $(G_n \!\cdot\! x)$ 
established recently in \cite{XGL19b}
is not appropriate for our proof because the reminder terms therein are not precise enough. 
We need to track the dependence of the remainder term on the H\"older norm of the function $\varphi = \chi_{n,k}^y$, 
see Theorem \ref{Thm-Edge-Expan}. 
% improve it by providing better rates for the term with the H\"older norm of the function $\varphi = \chi_{n,k}^y$, 
%see Theorem \ref{Thm-Edge-Expan}. 
In contrast to the previous work \cite{DKW21}, 
the partition of the unity that we use should become finer and finer as $n \to \infty$, in order to recover the term $d_{1}(y)$, 
see Lemma \ref{new bound for delta020}.  
Finally, another delicate point is to %establish 
%make use of the Edgeworth expansion for the couple $(G_n \!\cdot\! x, \sigma(G_n, x))$
%with target functions on the Markov chain $(G_n \!\cdot\! x)$ established recently in \cite{XGL19b}.  
%Finally, the result \eqref{BerryEsseen_Coeffaa-Intro} is obtained by 
patch up the expansions for couples  $(\chi_{n,k}^y (G_n \!\cdot\! x), \sigma(G_n, x))$
by means of the H\"older regularity of the invariant measure $\nu$
and the linearity in $\varphi$ of the asymptotic bias term $b_{\varphi}(x)$.  
\section{Main results}\label{Sec-main-result}

%\subsection{Notation and conditions}
For any integer $d \geq 1$, denote by $V = \bb R^d$ the $d$-dimensional Euclidean space.
We fix a basis $e_1, \ldots, e_d$ of $V$ and the associated norm on $V$ is defined by $\|v\|^2 = \sum_{i=1}^d |v_i|^2$ for $v = \sum_{i=1}^d v_i e_i \in V$. 
Let $V^*$ be the dual vector space of $V$ and its dual basis is denoted by $e_1^*, \ldots, e_d^*$ 
so that $e_i^*(e_j)= 1$ if $i=j$ and  $e_i^*(e_j)= 0$ if $i\neq j$. 
Let $\wedge^2 V$ be the exterior product of $V$
and we use the same symbol $\| \cdot \|$ for the norms induced on $\wedge^2 V$ and $V^*$.  
We equip $\bb P (V)$ with the angular distance 
\begin{align}\label{Angular-distance}
d(x, x') = \frac{\| v \wedge v' \|}{ \|v\| \|v'\| }  \quad \mbox{for} \  x= \bb R v \in \bb P(V), \  x' = \bb R v' \in \bb P(V). 
\end{align}
We use the symbol $\langle \cdot, \cdot \rangle$ 
to denote the dual bracket defined by $\langle f, v \rangle = f(v)$ for any $v \in V$ and $f \in V^*$. 
Set 
\begin{align*}%\label{}
\delta(x,y) = \frac{| \langle f, v \rangle |}{\|f\| \|v\| }  \quad \mbox{for} \  x= \bb R v \in \bb P(V),  \  y = \bb R f \in \bb P(V^*).
\end{align*}
Denote by $\mathscr{C}(\bb{P}(V) )$ the space of complex-valued continuous functions on $\bb{P}(V)$, 
equipped with the norm $\|\varphi\|_{\infty}: =  \sup_{x\in \bb{P}(V) } |\varphi(x)|$ for $\varphi \in \mathscr{C}(\bb{P}(V) )$.
Let $\gamma>0$ be a constant and set
\begin{align*}
\|\varphi\|_{\gamma}: =  \|\varphi\|_{\infty} + [\varphi]_{\gamma}, \quad  \mbox{where}  \  
[\varphi]_{\gamma} =  \sup_{x, x' \in \bb{P}(V): x \neq x'} \frac{|\varphi(x)-\varphi(x')|}{ d(x, x')^{\gamma} }.  
\end{align*}
Consider the Banach space 
\begin{align*}
\mathscr{B}_{\gamma}:= \left\{ \varphi\in \mathscr{C}(\bb P(V)): \|\varphi\|_{\gamma}< \infty \right\},   
\end{align*}
which consists of complex-valued $\gamma$-H\"older continuous functions on $\bb P(V)$. 
Denote by $\mathscr{L(B_{\gamma},B_{\gamma})}$ 
the set of all bounded linear operators from $\mathscr{B}_{\gamma}$ to $\mathscr{B}_{\gamma}$
equipped with the operator norm
$\left\| \cdot \right\|_{\mathscr{B}_{\gamma} \to \mathscr{B}_{\gamma}}$. 
The topological dual of $\mathscr B_\gamma$ endowed with the induced norm 
is denoted by $\mathscr B'_\gamma$.
Let $\mathscr B_\gamma^*$ be the Banach space of $\gamma$-H\"older continuous functions
on $\mathbb P(V^*)$ endowed with the norm
\begin{align*} %\label{}
\| \varphi \|_{\mathscr B_\gamma^*} = 
 \sup_{y\in \mathbb P(V^*) } |\varphi(y)| 
  + \sup_{  y,y'\in \bb P(V^*): \, y \neq y' } \frac{ |\varphi(y)-\varphi(y')| }{ d(y,y')^{\gamma} }, 
\end{align*}
where $d(y,y') = \frac{\| f \wedge f' \|}{ \|f\| \|f'\| }$ for $y= \bb R f \in \bb P(V^*)$ and $y' = \bb R f' \in \bb P(V^*)$.

Let $\textup{GL}(V)$ be the general linear group of the vector space $V$.
The action of $g \in \textup{GL}(V)$ on a vector $v \in V$ is denoted by $gv$, 
and the action of $g \in \textup{GL}(V)$ on a projective line $x = \bb R v \in \bb P(V)$ is denoted by $g \cdot x = \bb R gv$. 
For any $g \in \textup{GL}(V)$, let $\| g \| = \sup_{v \in V \setminus \{0\} } \frac{\| g v \|}{\|v\|}$
and denote $N(g) = \max \{ \|g\|, \| g^{-1} \| \}$. 
Let $\mu$ be a Borel probability measure on $\textup{GL}(V)$. 
%Furstenberg-Kesten's theorem \cite{FK60} states that if $\int_{ \textup{GL}(V) } \log N(g)  \mu(dg) < \infty$, 
%then 
%\begin{align}\label{def-lambda1}
% \lim_{n \to \infty} \frac{1}{n} \bb E \log \|G_n\| =: \lambda, 
%\end{align}
%where $\lambda \in \bb R$ is the first Lyapunov exponent of $\mu$. 

We shall use the following exponential moment condition. 

\begin{conditionA}%[Exponential moment condition]
\label{Ch7Condi-Moment} 
There exists a constant $\ee >0$ such that $\int_{ \textup{GL}(V) } N(g)^{\ee} \mu(dg) < \infty$. 
\end{conditionA}

Let $\Gamma_{\mu}$ be the smallest closed subsemigroup
generated by the support of the measure $\mu$. 
An endomorphism $g$ of $V$ is said to be proximal 
if it has an eigenvalue $\lambda$ with multiplicity one and all other eigenvalues of $g$ have modulus strcitly less than $|\lambda|$.
We shall need the following strong irreducibility and proximality condition. 

\begin{conditionA}\label{Ch7Condi-IP}
{\rm (i)(Strong irreducibility)} 
No finite union of proper subspaces of $V$ is $\Gamma_{\mu}$-invariant.

{\rm (ii)(Proximality)}
$\Gamma_{\mu}$ contains a proximal endomorphism. 
\end{conditionA}

Define the norm cocycle $\sigma: \textup{GL}(V) \times \bb P(V) \to \bb R$ as follows: 
\begin{align*}%\label{}
\sigma (g, x) = \log \frac{\|gv\|}{\|v\|} \quad   \mbox{for any} \  g \in \textup{GL}(V)  \  \mbox{and}  \   x = \bb R v \in \bb P(V). 
\end{align*}
Recall that the first Lyapunov exponent $\lambda$ is defined by \eqref{Ch7_SLLN_Entry0a}. 
By \cite[Proposition 3.15]{XGL19b}, under \ref{Ch7Condi-Moment} and \ref{Ch7Condi-IP},
 the following limit exists and is independent of $x \in \bb P(V)$:   
\begin{align}\label{Def-sigma}
\sigma^2: = \lim_{n \to \infty} \frac{1}{n} \bb E \left[ (\sigma (G_n, x) - n \lambda)^2 \right] \in (0, \infty).  
\end{align}
For any $s \in (-s_0, s_0)$ with $s_0 >0$ small enough, 
we define the transfer operator $P_s$ as follows: for any bounded measurable function $\varphi$ on $\bb P(V)$, 
\begin{align}\label{Def_Ps001}
P_s \varphi(x) = \int_{ \textup{GL}(V) } e^{s \sigma(g, x)} \varphi(g \!\cdot\! x) \mu(dg),  
\quad  x \in \bb P(V). 
\end{align}
It will be shown in Lemma \ref{Ch7transfer operator} that 
there exists a constant $s_0 >0$ such that for any $s \in (-s_0, s_0)$,
the operator $P_s \in \mathscr{L(B_{\gamma},B_{\gamma})}$ %belongs to $\mathscr{L(B_{\gamma},B_{\gamma})}$, 
has 
a unique dominant eigenvalue $\kappa(s)$ 
with $\kappa(0) = 1$ and the mapping $s \mapsto \kappa(s)$ being analytic. 
We denote $\Lambda = \log \kappa$.
%Set $\gamma_m = \Lambda^{(m)}(0)$ for any $m \geq 1$.
%In particular, $\gamma_1 = \lambda$ and $\gamma_2 = \sigma^2$. 
%Throughout the paper, we write $\zeta$ for the Cram\'{e}r series \cite{Pet75}: 
%\begin{align}\label{Ch7Def-CramSeri}
%\zeta(t)=\frac{\gamma_3}{6\gamma_2^{3/2} } + \frac{\gamma_4\gamma_2-3\gamma_3^2}{24\gamma_2^3}t
%+ \frac{\gamma_5\gamma_2^2-10\gamma_4\gamma_3\gamma_2 + 15\gamma_3^3}{120\gamma_2^{9/2}}t^2 + \cdots, 
%\end{align}
%which converges for  $|t|$ small enough.

Under \ref{Ch7Condi-Moment} and \ref{Ch7Condi-IP},
the Markov chain $(G_n \!\cdot\! x)_{n \geq 0}$ has a unique invariant probability measure 
$\nu$ on $\bb P(V)$ such that for any bounded measurable function $\varphi$ on $\bb P(V)$,
\begin{align} \label{Ch7mu station meas}
\int_{\bb P(V)} \int_{\textup{GL}(V)} \varphi(g \!\cdot\! x) \mu(dg) \nu(dx) 
 = \int_{ \bb P(V) } \varphi(x) \nu(dx)
= : \nu(\varphi).
 \end{align}

For any $\varphi \in \mathscr{B}_{\gamma}$, define the functions
\begin{align} \label{drift-b001}
b_{\varphi}(x): = \lim_{n \to \infty}
   \mathbb{E} \big[ ( \sigma(G_n, x) - n \lambda ) \varphi(G_n \!\cdot\! x) \big], 
\quad   x \in \bb P(V) 
\end{align}
and
\begin{align} \label{drift-d001}
d_{\varphi}(y) :=  \int_{\bb P(V)} \varphi(x) \log \delta(x, y)\nu(dx), \quad  y \in \bb P(V^*). 
\end{align}
It will be shown in Lemmas \ref{Lem-Bs} and \ref{Lem-ds} that
both functions $b_{\varphi}$ and $d_{\varphi}$ are well-defined and $\gamma$-H\"older continuous. 
%$b_{\varphi} \in \scr B_{\gamma}$, $d_{\varphi} \in \scr B_{\gamma}^*$.
Denote $\phi(u) = \frac{1}{\sqrt{2 \pi}} e^{- u^2/2}$,  $u \in \bb R$. 
Let
 $\Phi(t) =  \int_{- \infty}^t  \phi(u) du$, $t \in \bb R$
be the standard normal distribution function. 
%Recall that $\sigma^2$ is the asymptotic variation given by \eqref{Def-sigma}. 

In many applications it is of primary interest to give an estimation of the rate of convergence in the Gaussian approximation \eqref{Ch7_CLT_Entry0a}.  
In this direction we establish the following first-order Edgeworth expansion for the coefficients $\langle f, G_n v \rangle$. 
%Now we are ready to state a first-order Edgeworth expansion for the coefficients $\langle f, G_n v \rangle$.
 
\begin{theorem}\label{Thm-Edge-Expan-Coeff001}
Assume \ref{Ch7Condi-Moment} and \ref{Ch7Condi-IP}. 
Then, there exists a constant $\gamma >0$ such that for any $\ee > 0$, uniformly in 
 $t \in \bb R$, $x=\bb R v \in \bb P(V)$, $ y = \bb R f \in \bb P(V^*)$ with $\|v\| = \|f\| =1$, and
  $\varphi \in \mathscr{B}_{\gamma}$,  as $n\to \infty$,
\begin{align}\label{EdgeworthExpan}
& \mathbb{E}
   \Big[  \varphi(G_n \!\cdot\! x) \mathds{1}_{ \big\{ \frac{\log |\langle f, G_n v \rangle| - n \lambda  }{\sigma \sqrt{n}} \leq t \big\} } \Big]
    = \nu(\varphi) \Big[  \Phi(t) + \frac{\Lambda'''(0)}{ 6 \sigma^3 \sqrt{n}} (1-t^2) \phi(t) \Big]
     \notag\\
& \qquad\qquad\qquad 
 - \frac{ b_{\varphi}(x) + d_{\varphi}(y) }{ \sigma \sqrt{n} } \phi(t) 
  +  \nu(\varphi)  o \Big( \frac{ 1 }{\sqrt{n}} \Big)  +  \lVert \varphi \rVert_{\gamma} O \Big( \frac{ 1 }{n^{1 - \ee} } \Big).     
\end{align}
\end{theorem}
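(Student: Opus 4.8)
The plan is to exploit the decomposition \eqref{Ch7_Intro_Decom0a}, which reduces $\log|\langle f, G_n v\rangle|$ to the norm cocycle $\sigma(G_n,x)$ perturbed by the term $\log\delta(G_n\!\cdot\! x,y)$, and then to apply the refined couple Edgeworth expansion of Theorem \ref{Thm-Edge-Expan}. With $u_n := n\lambda + t\sigma\sqrt n$, the left-hand side of \eqref{EdgeworthExpan} becomes
\[
\mathbb{E}\Big[\varphi(G_n\!\cdot\! x)\,\mathds{1}_{\{\sigma(G_n,x)+\log\delta(G_n\!\cdot\! x,y)\leq u_n\}}\Big].
\]
Since $0<\delta(\cdot,y)\leq 1$, the summand $\log\delta(G_n\!\cdot\! x,y)$ is nonpositive; it is of order $O(1)$ except when $G_n\!\cdot\! x$ is close to the hyperplane $\ker f$, an event whose $\nu$-measure is small by the H\"older regularity of $\nu$ (Lemmas \ref{Lem-Bs} and \ref{Lem-ds}). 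So the problem is, morally, a controlled perturbation of the threshold of $\sigma(G_n,x)$.

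First I would discretize the term $\log\delta(\cdot,y)$: fix a mesh $\eta_n\downarrow 0$ and a truncation level $M_n\uparrow\infty$ (both varying slowly in $n$) and, following Lemma \ref{new bound for delta020}, build a partition of unity $(\chi_{n,k}^y)_k$ on $\bb P(V)$ obtained by pulling back, through $x'\mapsto\log\delta(x',y)$, a partition of unity on $[-M_n,\infty)$ subordinate to intervals of length $\asymp\eta_n$, so that each $\chi_{n,k}^y$ is supported where $\log\delta(\cdot,y)$ stays within $O(\eta_n)$ of a value $a_k$ and $\sum_k\chi_{n,k}^y\equiv 1$ on $\{\log\delta(\cdot,y)\geq -M_n\}$. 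The key quantitative input is the control of $\|\chi_{n,k}^y\|_\gamma$, which deteriorates as $a_k\to-M_n$ because $x'\mapsto\log\delta(x',y)$ is only locally Lipschitz with constant blowing up like $\delta^{-1}$ near $\ker f$. Substituting $\sum_k\chi_{n,k}^y(G_n\!\cdot\! x)\,\mathds{1}_{\{\sigma(G_n,x)\leq u_n-a_k\}}$ for the indicator produces: (i) a truncation error from $\{\log\delta(\cdot,y)<-M_n\}$, which I would bound using the uniform H\"older regularity of the law of $G_n\!\cdot\! x$ together with a Cauchy--Schwarz estimate for the $\sigma$-weight, forcing $M_n$ to grow at least logarithmically; and (ii) a discretization error, which I would control by the first-order Edgeworth expansion for $\sigma(G_n,x)$ of \cite{XGL19b}, bounding the probability that $\sigma(G_n,x)$ lies in an interval of length $O(\eta_n)$ near $u_n$ by $O(\eta_n/\sqrt n + 1/n)$, hence $o(n^{-1/2})$ after summation.

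Next, for each cell $k$ I would apply Theorem \ref{Thm-Edge-Expan} to the couple $(\psi_k(G_n\!\cdot\! x),\sigma(G_n,x))$ with target function $\psi_k:=\varphi\chi_{n,k}^y$ and shifted threshold $s_k:=t-a_k/(\sigma\sqrt n)$:
\[
\mathbb{E}\big[\psi_k(G_n\!\cdot\! x)\mathds{1}_{\{\sigma(G_n,x)\leq u_n-a_k\}}\big]
=\nu(\psi_k)\Big[\Phi(s_k)+\tfrac{\Lambda'''(0)}{6\sigma^3\sqrt n}(1-s_k^2)\phi(s_k)\Big]-\tfrac{b_{\psi_k}(x)}{\sigma\sqrt n}\phi(s_k)+R_{n,k},
\]
the point being that the remainder splits as $R_{n,k}=\nu(|\psi_k|)\,o(n^{-1/2})+\|\psi_k\|_\gamma\,r_n$ with $r_n$ a small error (a negative power of $n$), uniformly in $k,y,t$; this splitting is exactly why Theorem \ref{Thm-Edge-Expan} is stated with explicit H\"older-norm dependence. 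Taylor-expanding $\Phi$ and $\phi$ at $t$ and using $|a_k|\leq M_n$ replaces $s_k$ by $t$ up to $O(M_n^2/n)$ per cell. Summing over $k$, I would use $\sum_k\psi_k=\varphi$ on $\{\log\delta\geq -M_n\}$ to get $\sum_k\nu(\psi_k)=\nu(\varphi)+o(1)$, the linearity of $\varphi\mapsto b_\varphi(x)$ (Lemma \ref{Lem-Bs}) to get $\sum_k b_{\psi_k}(x)=b_\varphi(x)+o(1)$, and a Riemann-sum estimate to get $\sum_k a_k\nu(\psi_k)=\int_{\bb P(V)}\varphi(x')\log\delta(x',y)\,\nu(dx')+o(1)=d_\varphi(y)+o(1)$, using $\log\delta(\cdot,y)\in L^1(\nu)$ (Lemma \ref{Lem-ds}); the aggregated remainder is $\leq\|\varphi\|_\infty\,o(n^{-1/2})+\|\varphi\|_\gamma\,\big(\sum_k\|\chi_{n,k}^y\|_\gamma\big)\,r_n$. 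A suitable joint choice of $\eta_n$ and $M_n$ then makes all errors of the required size, yielding \eqref{EdgeworthExpan} with the stated remainder $\nu(\varphi)o(n^{-1/2})+\|\varphi\|_\gamma O(n^{-1+\ee})$.

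The hard part is the balancing act in the two middle steps: the partition must be fine ($\eta_n\to 0$) and deep ($M_n\to\infty$) enough to recover the new bias $d_\varphi(y)$ and to control the truncation, yet the total cost $\big(\sum_k\|\chi_{n,k}^y\|_\gamma\big)r_n$ --- in which the cells near $\ker f$ contribute H\"older norms growing like $e^{O(\gamma M_n)}\eta_n^{-\gamma}$ while $M_n$ is already forced to be at least logarithmic --- must stay bounded by $O(n^{-1+\ee})$. Reconciling these constraints is what dictates the precise form of Theorem \ref{Thm-Edge-Expan} (where the $\|\varphi\|_\gamma$-dependent part of the remainder must be a sufficiently small power of $n$) and of Lemma \ref{new bound for delta020} (the sharp H\"older estimate for the pulled-back partition near the singular set), and is the technical core of the argument; everything else --- the Taylor expansions, the bounded-overlap estimates $\sum_k\nu(|\psi_k|)\lesssim\|\varphi\|_\infty$, and the passages to $b_\varphi(x)$ and $d_\varphi(y)$ --- is then routine.
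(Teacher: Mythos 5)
Your proposal follows essentially the same strategy as the paper: decompose $\log|\langle f, G_n v\rangle| = \sigma(G_n,x) + \log\delta(G_n\!\cdot\! x, y)$, pull back a partition of unity along $-\log\delta(\cdot,y)$ that becomes finer (mesh $\eta_n\to 0$) and deeper (truncation $M_n\to\infty$) as $n$ grows, apply the refined couple Edgeworth expansion of Theorem \ref{Thm-Edge-Expan} with target $\varphi\chi_{n,k}^y$ and shifted threshold on each cell, sum, and recover $d_\varphi(y)$ as a Riemann sum while using the linearity of $\varphi\mapsto b_\varphi(x)$. Your identification of the balancing constraint (the $\|\varphi\|_\gamma$-weighted part of the remainder must beat the growth of $\sum_k\|\chi_{n,k}^y\|_\gamma$, which is polynomial in $n$ once the depth is $\asymp\log n$ and the mesh $\asymp 1/\log n$) is exactly the correct one and matches Lemma \ref{lemmaHolder property001}.

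One step is wrong as literally stated, though easily repaired. You claim the discretization error per cell is controlled by "the probability that $\sigma(G_n,x)$ lies in an interval of length $O(\eta_n)$ near $u_n$, which is $O(\eta_n/\sqrt n + 1/n)$, hence $o(n^{-1/2})$ after summation." Summing that over the roughly $M_n/\eta_n$ cells gives $O(M_n/\sqrt n)$, which is \emph{not} $o(n^{-1/2})$ since $M_n\to\infty$. The correct bookkeeping must keep the weight attached: the cell-$k$ error is $O(\nu(\varphi\chi_{n,k}^y)\,\eta_n/\sqrt n)$ plus a $\|\chi_{n,k}^y\|_\gamma/n$ tail, and since $\sum_k\nu(\varphi\chi_{n,k}^y)=\nu(\varphi)$ the summed main contribution is $\nu(\varphi)\,O(\eta_n/\sqrt n)=\nu(\varphi)\,o(n^{-1/2})$. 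The paper sidesteps this entirely with a sandwich: for each $k$ one bounds $F_{n,k}(t)$ above with threshold shifted by $(k+1)a_n$ and below by $(k-1)a_n$ (see \eqref{boundFfbar-001} and \eqref{boundFfbar-003}), applies Theorem \ref{Thm-Edge-Expan} to both, and the discretization gap then shows up only as the difference $\frac{2a_n}{\sigma_s\sqrt n}\pi_s(\varphi_{n,k}^y)\phi(t)$ in the Taylor-expanded $\Phi$ term, which sums to $\frac{2a_n}{\sigma_s\sqrt n}\pi_s(\varphi)\phi(t)=o(n^{-1/2})$ because $a_n\to 0$; this is the content of Lemma \ref{new bound for delta020}. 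A smaller quibble: your Cauchy--Schwarz for the truncation is superfluous, since Lemma \ref{Lem_Regu_pi_s} directly gives $\bb Q_s^x\big(-\log\delta(G_n\!\cdot\! x,y)\geq A\log n\big)\leq c_0\,n^{-c_1 A}$, which suffices once $A$ is chosen large.
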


When compared with the standard Edgeworth expansion for sums of independent random variables (cf.\ \cite{Pet75}),
we see that two new terms $b_{\varphi}(x)$ and $d_{\varphi}(y)$ show up, 
which are explained by the presence of an asymptotic bias for this model. 
We should also note that the Edgeworth expansion \eqref{EdgeworthExpan} for the coefficients is different from
 that for the norm cocycle $\sigma (G_n, x)$ obtained in \cite{XGL19b}, 
 namely, by the presence of the term $d_{\varphi}(y)$. 
 The difficulty in proving this precise expansion for coefficient $\langle f, G_n v \rangle$ 
 consists in obtaining the exact expression of this new asymptotic bias term $d_{\varphi}(y)$.

As a consequence of Theorem \ref{Thm-Edge-Expan-Coeff001} one can get the Berry-Esseen bound \eqref{BerryEsseen_Coeffaa-Intro}
with the optimal convergence rate, under the exponential moment condition.
%\begin{theorem}\label{Thm-BerryEsseen-optimal}
%Under \ref{Ch7Condi-Moment} and \ref{Ch7Condi-IP}, 
%there exist constants $\gamma >0$ and $c > 0$  such that for any 
%$n \geq 1$, $t \in \bb R$, $x=\bb R v \in \bb P(V)$ and $f \in V^*$ with $\|v\| = \|f\| =1$, 
% and $\varphi \in \mathscr{B}_{\gamma}$,  
%\begin{align}\label{BerryEsseen_Coeffbb}
%\left|  \bb{E} \left[ \varphi(G_n \!\cdot\! x) 
%\mathds{1}_{ \big\{ \frac{\log |\langle f, G_n v \rangle| - n \lambda }{ \sigma \sqrt{n} } \leq t  \big\}  }   \right]
%- \nu (\varphi) \Phi(t)  \right|  
%\leq  \frac{c}{\sqrt{n}} \|\varphi\|_{\gamma}. 
%\end{align}
%\end{theorem}
%The expansion \eqref{EdgeworthExpan}
%improves on two very recent results in \cite{DKW21} and \cite{CDMP21b}:
%it is proved in \cite{DKW21} that \eqref{BerryEsseen_Coeffaa-Intro} holds, % holds for the particular case when $\varphi = 1$, 
%and in \cite{CDMP21b} the authors also consider the case where $\varphi = 1$ but the convergence rate $\frac{1}{\sqrt{n}}$ in \eqref{BerryEsseen_Coeffaa-Intro} is replaced by $\frac{\log n}{\sqrt{n}}$.
It is an open problem how to relax the exponential moment condition \ref{Ch7Condi-Moment}
for the Edgeworth expansion and for the Berry-Esseen bound.
Solving it seems very challenging.  
Even for the easier case of the norm cocycle, the %(optimal) 
Berry-Esseen bound $O(n^{-1/2})$ is not known under the optimal third moment condition;
it is only known under the fourth moment condition, see \cite{CDMP21}.
 %the optimal rate $O(n^{-1/2})$ 
%Such a bound  is established recently in \cite{CDMP21} under the fourth moment condition. 
For positive matrices, the Edgeworth expansion \eqref{EdgeworthExpan}
and the Berry-Esseen bound \eqref{BerryEsseen_Coeffaa-Intro} have been recently obtained
using a different approach in a forthcoming paper \cite{XGL21b}
under optimal moment conditions.  
%In the following theorem we get a Berry-Esseen type bound for invertible matrices under the sub-exponential moment condition,
%with an extra $\log^{\frac{1}{\alpha}} n$ factor. 
%\begin{theorem}\label{Thm_BerryEsseen}
%Assume \ref{Ch7Condi-IP} and that there exists a constant $\alpha \in (0, 1)$ 
%such that $\mu(\log N(g) > u) \leq \exp \{- u^{\alpha} a(u) \}$ for any $u >0$
%and for some function $a(u) >0$ satisfying $a(u) \to \infty$ as $u \to \infty$. 
%Then, there exists a constant $c > 0$ such that for any $n \geq 2$, $t \in \bb R$, 
% $v \in V$ and $f \in V^*$ with $\|v\| = \|f\| =1$,   
%\begin{align}\label{BerryEsseen_Coeffaa}
%\left|  \bb{P} \left( \frac{\log |\langle f, G_n v \rangle| - n \lambda }{ \sigma \sqrt{n} } \leq t \right) 
%- \Phi(t)  \right|  \leq  \frac{c \log^{\frac{1}{\alpha}} n }{\sqrt{n}}. 
%\end{align}
%\end{theorem}

%Note that the condition $\mu(\log N(g) > u) \leq \exp \{- u^{\alpha} a(u) \}$ 
%holds true if $\int_{ \textup{GL}(V) } e^{\log^{(\alpha + \ee)} N(g)} \,  \mu(dg)  < \infty$ for some $\ee >0$. 

%The proof of Theorem \ref{Thm_BerryEsseen} is based on the Berry-Esseen type bound recently established in \cite{CDMP21}
%and on the subexponential H\"older regularity of the invariant measure $\nu$ which is the subject of Section \ref{Sect-Regularity}. 

Finally we would like to mention that all the results of the paper remain valid when $V$ is $\bb C^d$ or $\bb K^d$, where 
$\bb K$ is any local field. 

\section{Proof of the Edgeworth expansion}

%%%%%%%%%%%%%%%%%%%%%%%%%%%%%%%%%%%%%%%%%%%%%%%%%%%%%%
\subsection{Preliminary results}\label{subsec-Pz}

For any $z \in \bb{C}$, % with $|\Re z|$ small enough, 
we define the complex transfer operator $P_z$ as follows: 
for any bounded measurable function $\varphi$ on $\bb P(V)$, 
\begin{align}\label{Def_Pz_Ch7}
P_z \varphi(x) = \int_{\textup{GL}(V)} e^{z \sigma(g, x)} \varphi(g \!\cdot\! x) \mu(dg),  
\quad  x \in \bb P(V). 
\end{align}
Throughout this paper let $B_{s_0}(0): = \{ z \in \bb{C}: |z| < s_0 \}$
be the open disc with center $0$ and radius $s_0 >0$ in the complex plane $\bb C$. 
The following result  
shows that the operator $P_z$ has spectral gap properties when $z \in B_{s_0}(0)$;  
we refer to \cite{LeP82, HH01, GL16, BQ16b, XGL19b} for the proof 
based on the perturbation theory of linear operators. 
Recall that $\mathscr{B}_{\gamma}'$ is the topological dual space of the Banach space $\mathscr{B}_{\gamma}$, 
and that $\mathscr{L(B_{\gamma},B_{\gamma})}$ 
is the set of all bounded linear operators from $\mathscr{B}_{\gamma}$ to $\mathscr{B}_{\gamma}$
equipped with the operator norm
$\left\| \cdot \right\|_{\mathscr{B}_{\gamma} \to \mathscr{B}_{\gamma}}$.

%%%%%%%%%%%%%%%%%%%%%%%%%%%%%%%%%%%%%%%%%%%%%%%%%%%%%%%%%
\begin{lemma}[\cite{BQ16b, XGL19b}]  \label{Ch7transfer operator}
Assume \ref{Ch7Condi-Moment} and \ref{Ch7Condi-IP}.  
Then, there exists a constant $s_0 >0$ such that for any $z \in B_{s_0}(0)$ and $n \geq 1$, 
\begin{align}\label{Ch7Pzn-decom}
P_z^n = \kappa^n(z) \nu_z \otimes r_z + L_z^n, 
\end{align}
where 
\begin{align*}
z \mapsto \kappa(z) \in \bb{C}, \quad   z \mapsto r_z \in \mathscr{B}_{\gamma} , 
\quad   z \mapsto \nu_z \in \mathscr{B}_{\gamma}' , 
\quad   z \mapsto  L_z \in \mathscr{L(B_{\gamma},B_{\gamma})}
\end{align*}
are analytic mappings which satisfy, for any $z \in B_{s_0}(0)$, 

\begin{itemize}
\item[{\rm(a)}]
    the operator $M_z: = \nu_z \otimes r_z$ is a rank one projection on $\mathscr{B}_{\gamma}$,
    i.e. $M_z \varphi = \nu_z(\varphi) r_z$ for any $\varphi \in \mathscr{B}_{\gamma}$; 

\item[{\rm(b)}]
 $M_z L_z = L_z M_z =0$,  $P_z r_z = \kappa(z) r_z$ with $\nu(r_z) = 1$, and  $\nu_z P_z = \kappa(z) \nu_z$;

\item[{\rm(c)}]
    $\kappa(0) = 1$, $r_0 = 1$, $\nu_0 = \nu$ with $\nu$ defined by \eqref{Ch7mu station meas}, and 
    $\kappa(z)$ and $r_z$ are strictly positive for real-valued $z \in (-s_0, s_0)$.    
\end{itemize}
\end{lemma}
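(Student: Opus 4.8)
The plan is to obtain the statement through the Nagaev--Guivarc'h perturbation method, so I first treat the unperturbed operator $P_0$ and then perturb in $z$. \textbf{Step 1: quasi-compactness and spectral gap of $P_0$.} I would establish a Doeblin--Fortet (Lasota--Yorke) inequality on $\mathscr{B}_{\gamma}$: there are $n_0\geq 1$, $\rho\in(0,1)$, $C>0$ with $[P_0^{n_0}\varphi]_{\gamma}\leq \rho\,[\varphi]_{\gamma}+C\|\varphi\|_{\infty}$ for all $\varphi\in\mathscr{B}_{\gamma}$. This relies on the fact that a positive proportion of words in $\Gamma_{\mu}$ act as strong contractions on $\bb P(V)$ for the angular distance (a consequence of the proximality in \ref{Ch7Condi-IP}(ii)), together with the integrability furnished by \ref{Ch7Condi-Moment} (which controls the relevant $N(g)$-moments, needed in a robust form for the later perturbation). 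Since $\mathscr{B}_{\gamma}\hookrightarrow\mathscr{C}(\bb P(V))$ is compact by Arzel\`a--Ascoli, the Ionescu-Tulcea--Marinescu theorem gives that $P_0$ is quasi-compact on $\mathscr{B}_{\gamma}$ with spectral radius $1$. To pin down the peripheral spectrum I would use strong irreducibility \ref{Ch7Condi-IP}(i): an eigenfunction for a modulus-one eigenvalue has constant modulus by a maximum argument, and irreducibility together with the support of $\nu$ forces the only peripheral eigenvalue to be $1$, simple, with eigenfunction the constant $\mathbf 1$ and left eigenfunctional the invariant measure $\nu$ from \eqref{Ch7mu station meas}. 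Hence $P_0=M_0+L_0$ with $M_0=\nu\otimes\mathbf 1$ a rank-one projection and $\mathrm{spr}(L_0)<1$.

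\textbf{Step 2: analyticity of $z\mapsto P_z$.} Next I would check that $z\mapsto P_z$ is analytic from $B_{s_0}(0)$ into $\mathscr{L(B_{\gamma},B_{\gamma})}$ for $s_0>0$ small. Expanding $e^{z\sigma(g,x)}=\sum_{k\geq 0}\frac{z^k}{k!}\sigma(g,x)^k$ and using $|\sigma(g,x)|\leq\log N(g)$, condition \ref{Ch7Condi-Moment} makes $\sum_k\frac{|z|^k}{k!}\int(\log N(g))^k\mu(dg)$ converge for $|z|<\ee$; the $\gamma$-H\"older seminorm of $x\mapsto e^{z\sigma(g,x)}\varphi(g\!\cdot\!x)$ is controlled by the same moments, since both $\sigma(g,\cdot)$ and the projective action $g\!\cdot$ have Lipschitz constants that are polynomial in $N(g)$. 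This produces $P_z=\sum_{k\geq 0}\frac{z^k}{k!}P^{(k)}$ with $P^{(k)}\in\mathscr{L(B_{\gamma},B_{\gamma})}$ and the series convergent in operator norm, hence analyticity of $z\mapsto P_z$.

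\textbf{Step 3: perturbation and identification.} Since $1$ is an isolated simple eigenvalue of $P_0$ with the remainder of the spectrum in a disc of radius $<1$, analytic perturbation theory (Kato) applies: after shrinking $s_0$, for every $z\in B_{s_0}(0)$ the operator $P_z$ has exactly one eigenvalue $\kappa(z)$ near $1$, it is simple, and $z\mapsto\kappa(z)$, the spectral projection $M_z=\frac{1}{2\pi i}\oint(w-P_z)^{-1}\,dw$ (small contour around $1$), and $L_z:=P_z-\kappa(z)M_z$ are all analytic, with $\mathrm{spr}(L_z)\leq\delta<|\kappa(z)|$ uniformly on $B_{s_0}(0)$. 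Using $M_z^2=M_z$, $M_zL_z=L_zM_z=0$ and iterating $P_z=\kappa(z)M_z+L_z$ yields $P_z^n=\kappa^n(z)M_z+L_z^n$, i.e.\ \eqref{Ch7Pzn-decom}. As $M_z$ has rank one it factors as $M_z\varphi=\nu_z(\varphi)r_z$ with $z\mapsto r_z\in\mathscr{B}_{\gamma}$ and $z\mapsto\nu_z\in\mathscr{B}_{\gamma}'$ analytic, uniquely normalised by $\nu(r_z)=1$; then $P_zr_z=\kappa(z)r_z$ and $\nu_zP_z=\kappa(z)\nu_z$ follow from $P_zM_z=M_zP_z=\kappa(z)M_z$, and the values at $z=0$ are read off from Step 1, giving (a), (b) and the equalities $\kappa(0)=1$, $r_0=1$, $\nu_0=\nu$. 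For real $z=s\in(-s_0,s_0)$, $P_s$ preserves the cone of nonnegative $\gamma$-H\"older functions and is primitive there by irreducibility and proximality; a Perron--Frobenius argument on this cone (or continuity from $s=0$, where $\kappa(0)=1>0$, $r_0\equiv 1>0$, together with the facts that $\kappa(s)\neq 0$ and that $r_s$ cannot change sign on the connected space $\bb P(V)$) gives $\kappa(s)>0$ and $r_s>0$ everywhere, completing (c).

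\textbf{Main obstacle.} The only genuinely substantive point is inside Step 1: the Lasota--Yorke inequality alone yields quasi-compactness but not the spectral gap, and one truly needs both parts of \ref{Ch7Condi-IP} --- strong irreducibility to rule out higher multiplicity and proximality to exclude other modulus-one eigenvalues --- to reduce the peripheral spectrum of $P_0$ to the single simple eigenvalue $1$. Everything downstream (Steps 2--3) is routine once \ref{Ch7Condi-Moment} is available; this classical construction goes back to Le Page and is carried out in the cited references \cite{LeP82, HH01, GL16, BQ16b, XGL19b}, from which the lemma may also simply be quoted.
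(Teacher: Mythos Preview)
Your outline is correct and matches the standard Nagaev--Guivarc'h/Kato perturbation argument that the paper invokes: the paper does not give its own proof of this lemma but simply quotes it from \cite{LeP82, HH01, GL16, BQ16b, XGL19b}, and your three-step sketch (Lasota--Yorke $\Rightarrow$ quasi-compactness of $P_0$, peripheral spectrum reduced to $\{1\}$ via \ref{Ch7Condi-IP}, then analytic perturbation using \ref{Ch7Condi-Moment}) is exactly the route taken in those references. Your identification of the peripheral-spectrum analysis as the substantive point is also accurate.
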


Using Lemma \ref{Ch7transfer operator}, 
a change of measure can be performed below.
For any $s \in (-s_0, s_0)$ with $s_0>0$ sufficiently small, 
 any $x \in \bb P(V)$ and $g \in \textup{GL}(V)$, denote
\begin{align*}
q_n^s(x, g) = \frac{ e^{s \sigma(g, x) } }{ \kappa^{n}(s) } \frac{ r_s(g \!\cdot\! x) }{ r_s(x) },
\quad  n \geq 1. 
\end{align*}
Since the eigenvalue $\kappa(s)$ and the eigenfunction $r_s$ are strictly positive for $s \in (-s_0, s_0)$, 
using $P_s r_s = \kappa(s) r_s$ we get that 
\begin{align*}%\label{}
\bb Q_{s,n}^x (dg_1, \ldots, dg_n) = q_n^s(x, G_n) \mu(dg_1) \ldots \mu(dg_n),  \quad  n \geq 1, 
\end{align*}
are probability measures and 
form a projective system on $\textup{GL}(V)^{\bb{N}}$. 
By the Kolmogorov extension theorem, 
there exists a unique probability measure  $\bb Q_s^x$ on $\textup{GL}(V)^{\bb{N}}$ with marginals $\bb Q_{s,n}^x$. 
We write $\bb{E}_{\bb Q_s^x}$ for the corresponding expectation 
and the change of measure formula holds: 
for any $s \in (-s_0, s_0)$, $x \in \bb P(V)$, 
$n\geq 1$ and bounded measurable function $h$ on $(\bb P(V) \times \bb R)^{n}$,  
\begin{align}\label{Ch7basic equ1}
&  \frac{1}{ \kappa^{n}(s) r_{s}(x) } \bb{E}  \Big[   
r_{s}(G_n \!\cdot\! x) e^{s \sigma(G_n, x) } h \Big( G_1 \!\cdot\! x, \sigma(G_1, x), \dots, G_n \!\cdot\! x, \sigma(G_n, x) \Big)
  \Big]   \nonumber\\
&  =   \bb{E}_{\bb{Q}_{s}^{x}} \Big[ h \Big( G_1 \!\cdot\! x, \sigma(G_1, x), \dots, G_n \!\cdot\! x, \sigma(G_n, x) \Big) \Big].
\end{align}
Under the changed measure $\bb Q_s^x$, the process $(G_n \!\cdot\! x)_{n \geq 0}$ is a Markov chain 
with the transition operator $Q_s$ given as follows: for any $\varphi \in \mathscr{C}(\bb P(V))$, 
\begin{align*}
Q_{s}\varphi(x) = \frac{1}{\kappa(s)r_{s}(x)}P_s(\varphi r_{s})(x),  \quad   x \in \bb P(V).
\end{align*}
Under \ref{Ch7Condi-Moment} and \ref{Ch7Condi-IP}, 
it was shown in \cite{XGL19b} that 
the Markov operator $Q_s$ has a unique invariant probability measure $\pi_s$ given by 
\begin{align}\label{ExpCon-Qs}
\pi_s(\varphi) = \frac{ \nu_s( \varphi r_s) }{ \nu_s(r_s) }  \quad  \mbox{for any } \varphi \in \mathscr{C}(\bb P(V)).
\end{align}
By \cite[Proposition 3.13]{XGL19b}, the following strong law of large numbers 
for the norm cocycle under the changed measure $\bb Q_s^x$ holds: under \ref{Ch7Condi-Moment} and \ref{Ch7Condi-IP}, 
for any $s\in (-s_0, s_0)$ and $x \in \bb P(V)$, 
\begin{align*}%\label{}
\lim_{n \to \infty} \frac{ \sigma(G_n, x) }{n} = \Lambda'(s),  \quad  \bb Q_s^x\mbox{-a.s.}
\end{align*}
where $\Lambda(s) = \log \kappa(s)$. 

We need the following H\"older regularity of the invariant measure $\pi_s$. % from \cite{GQX20}.
\begin{lemma}[\cite{GQX20}]  \label{Lem_Regu_pi_s00}
Assume \ref{Ch7Condi-Moment} and \ref{Ch7Condi-IP}. Then there exist constants $s_0 >0$ and $\eta > 0$ such that
\begin{align} \label{Regu_pi_s_00}
\sup_{ s\in (-s_0, s_0) } \sup_{y \in \bb P(V^*) } 
  \int_{\bb P(V) } \frac{1}{ \delta(x, y)^{\eta} } \pi_s(dx) < + \infty. 
\end{align}
\end{lemma}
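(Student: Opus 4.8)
The plan is to establish the equivalent \emph{uniform tail bound}
\begin{align*}
\Psi(\ee):=\sup_{s\in(-s_0,s_0)}\ \sup_{y\in\bb P(V^*)}\ \pi_s\big(\{x\in\bb P(V):\delta(x,y)\leq\ee\}\big)\ \leq\ C\ee^{\alpha},
\qquad \ee\in(0,1],
\end{align*}
for suitable constants $C,\alpha>0$; since $\delta(\cdot,y)\leq1$ on $\bb P(V)$, the layer-cake formula then gives, for any $0<\eta<\alpha$,
\begin{align*}
\int_{\bb P(V)}\frac{1}{\delta(x,y)^{\eta}}\,\pi_s(dx)
 = 1+\eta\int_0^1\ee^{-\eta-1}\,\pi_s\big(\delta(\cdot,y)\leq\ee\big)\,d\ee
 \leq 1+\frac{\eta C}{\alpha-\eta},
\end{align*}
which is \eqref{Regu_pi_s_00}. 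To reduce further from $\pi_s$ to the left eigenmeasure $\nu_s$, recall from \eqref{ExpCon-Qs} that $\pi_s(\varphi)=\nu_s(\varphi r_s)/\nu_s(r_s)$; for real $s$ the operator $P_s$ is positive, so by Perron--Frobenius its leading spectral data satisfy $r_s>0$ and $\nu_s\geq0$, and by Lemma \ref{Ch7transfer operator}(c) together with the analyticity of $s\mapsto(\kappa(s),r_s,\nu_s)$ one may shrink $s_0$ so that $0<c\leq r_s\leq c^{-1}$ and $\nu_s(r_s)\geq c$ on $(-s_0,s_0)$. Hence $\pi_s(\delta(\cdot,y)\leq\ee)\leq c^{-2}\,\nu_s(\delta(\cdot,y)\leq\ee)$, and it suffices to bound $\psi_s(\ee):=\sup_{y}\nu_s(\delta(\cdot,y)\leq\ee)$ uniformly in $s$.

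The core is a self-improving estimate for $\psi_s$, following the scheme of Guivarc'h (see \cite{GR85} and \cite{GQX20}). The algebraic input is the identity $\langle f,gv\rangle=\langle g^{*}f,v\rangle$, which for $x=\bb R v$, $y=\bb R f$ gives $\delta(g\!\cdot\! x,y)=\delta(x,g^{*}\!\cdot\! y)\,\|g^{*}f\|\,\|v\|/(\|f\|\,\|gv\|)\geq N(g)^{-2}\,\delta(x,g^{*}\!\cdot\! y)$, using $\|g^{*}f\|\geq\|f\|/N(g)$ and $\|gv\|\leq N(g)\|v\|$; iterating, $\delta(G_n\!\cdot\! x,y)\geq N(G_n)^{-2}\delta(x,G_n^{*}\!\cdot\! y)$. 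Using the eigenrelation $\nu_sP_s^{n}=\kappa^{n}(s)\nu_s$ of Lemma \ref{Ch7transfer operator}(b), one writes
\begin{align*}
\nu_s\big(\delta(\cdot,y)\leq\ee\big)
 = \kappa^{-n}(s)\int_{\bb P(V)}\bb E\Big[e^{s\sigma(G_n,x)}\,
   \mathds{1}_{\{\delta(G_n \cdot x, y)\leq\ee\}}\Big]\,\nu_s(dx),
\end{align*}
and invokes the contraction furnished by \ref{Ch7Condi-Moment} and \ref{Ch7Condi-IP}: off a $\bb P$-event of probability $\leq Ce^{-cn}$ the product $G_n$ is $(n,\tau)$-proximal for a fixed $\tau>0$ — its two leading singular values differing by a factor $\geq e^{n\tau}$, a standard large-deviation consequence of the simplicity of the top Lyapunov exponent — in which case $G_n\!\cdot\!{}$ maps the complement of an $e^{-n\tau}$-neighbourhood of a hyperplane $H_n=H_n(G_n)$ into a ball of radius $\leq Ce^{-n\tau}$ about a point $z_n=z_n(G_n)$. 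Splitting $\{\delta(G_n\!\cdot\! x,y)\leq\ee\}$ according to whether $x$ lies in the $e^{-n\tau}$-neighbourhood of $H_n$ (of $\nu_s$-mass $\leq\psi_s(e^{-n\tau})$) or $\delta(z_n,y)\leq\ee+Ce^{-n\tau}$ (a probability bounded, via one further use of the eigenrelation, by $\psi_s(\ee+Ce^{-n\tau})$ up to the weight $e^{s\sigma(G_n,x)}\leq N(G_n)^{|s|}$, which is integrable by \ref{Ch7Condi-Moment}), and choosing $n$ of order $\log(1/\ee)$, one arrives at
\begin{align*}
\psi_s(\ee)\ \leq\ \rho\,\psi_s(C'\ee)\ +\ C\,\ee^{\alpha_0}
\end{align*}
with $\rho<1$, $C'>1$, $\alpha_0>0$ all independent of $s\in(-s_0,s_0)$ and of $\ee$. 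Iterating this inequality $O(\log(1/\ee))$ times, which is legitimate once $\alpha_0$ is taken small enough that $\rho\,C'^{\alpha_0}<1$, yields $\psi_s(\ee)\leq C\ee^{\alpha}$ with $\alpha=\min\{\alpha_0,\ (-\log\rho)/\log C'\}$, uniformly in $s$; this completes the plan.

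The main obstacle is the uniformity in $s$. Every constant entering the self-improving inequality — the spectral data $\kappa(s),r_s,\nu_s$, the balance of the tilting weight $e^{s\sigma(G_n,x)}$ against $\kappa^{n}(s)$, and the proximality/contraction estimates, which for the tilted walk have to be run under the changed measures $\bb Q_s^{x}$ of \eqref{Ch7basic equ1} — must be controlled uniformly on $(-s_0,s_0)$, and it is precisely in handling these weights that the exponential moment \ref{Ch7Condi-Moment} is used in an essential way (the bare second moment would not suffice). Once $s_0$ is taken small enough, the $s=0$ argument goes through with $s$-independent constants, but checking this carefully is where the bulk of the work lies.
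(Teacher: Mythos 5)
The paper does not prove Lemma \ref{Lem_Regu_pi_s00}: it is quoted verbatim from the reference \cite{GQX20} and used as a black box, so there is no in-text proof to compare your attempt against. That said, your outline is the right \emph{type} of argument (Guivarc'h--Raugi--Le Page regularity of stationary measures via proximality and the spectral gap of $P_s$), and the preliminary reductions — the layer-cake formula reducing \eqref{Regu_pi_s_00} to a tail bound, and the passage from $\pi_s$ to $\nu_s$ using the uniform positivity of $r_s$ and $\nu_s(r_s)$ near $s=0$ — are sound and match what one does in the literature.

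The genuine gap is in the self-improving inequality. After the split of $\{\delta(G_n\!\cdot\! x,y)\leq\ee\}$ into ``$x$ near $H_n$'' and ``$\delta(z_n,y)$ small'', what you have actually derived (up to the superpolynomial error from non-proximality) is an estimate of the form
$\psi_s(\ee)\ \lesssim\ \psi_s(e^{-n\tau})\ +\ \psi_s(\ee+Ce^{-n\tau})\ +\ \text{error}$,
that is, a \emph{sum} of two terms each of which is again a value of $\psi_s$, with overall mass one because the tilted weight $e^{s\sigma(G_n,x)}/\kappa^n(s)$ integrates to one under $\nu_s P_s^n = \kappa^n(s)\nu_s$. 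Taking $n\asymp\log(1/\ee)/\tau$ then makes the first term $\psi_s(\ee)$ itself and the second $\psi_s(C'\ee)$ with $C'>1$: the resulting recursion $\psi_s(\ee)\lesssim\psi_s(\ee)+\psi_s(C'\ee)+\dots$ is vacuous. The factor $\rho<1$ that you assert in $\psi_s(\ee)\leq\rho\,\psi_s(C'\ee)+C\ee^{\alpha_0}$ is nowhere produced by the split you describe; neither the $\nu_s$-mass of the hyperplane neighbourhood nor the $\bb P$-probability that $z_n(G_n)$ falls near $y^\perp$ enters as a \emph{multiplicative} contraction in your bound. To close the argument one must either (i) replace the tail-bound bootstrap by a genuine Foster--Lyapunov drift estimate for $u_\eta(x,y)=\delta(x,y)^{-\eta}$, showing that the tilted operator $Q_s$ (or its $n_0$-th power) contracts $u_\eta$ up to an additive constant, uniformly in $s$ and $y$ — this is where the honest $\rho<1$ comes from and is what \cite{GQX20} and the earlier sources do; or (ii) exploit that the $\nu_s$-mass of a fixed-radius hyperplane neighbourhood is bounded away from $1$ uniformly, which itself requires a separate non-degeneracy input. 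As written, your iteration scheme is formally consistent (the condition $\rho C'^{\alpha_0}<1$ would indeed let you conclude) but the hypothesis $\rho<1$ is not established, so the proof is incomplete.
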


We also need the following property: 

\begin{lemma}[\cite{GQX20}]  \label{Lem_Regu_pi_s}
Assume \ref{Ch7Condi-Moment} and \ref{Ch7Condi-IP}. 
Then, for any $\ee >0$, there exist constants $s_0 >0$ and $c, C >0$ such that
for all $s \in (-s_0, s_0)$, $n \geq k \geq 1$, $x \in \bb P(V)$ and $y \in \bb P(V^*)$, 
\begin{align}\label{Regu_pi_s}
\bb Q_s^x \Big( \log \delta(G_n \!\cdot\! x, y) \leq -\ee k  \Big) \leq C e^{- ck}. 
\end{align}
\end{lemma}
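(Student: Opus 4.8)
The plan is to derive the estimate from two facts already available in the excerpt: the spectral gap of the Markov operator $Q_s$ on $\mathscr B_\gamma$ (contained in Lemma~\ref{Ch7transfer operator}) and the H\"older regularity of its invariant measure $\pi_s$ (Lemma~\ref{Lem_Regu_pi_s00}). Together these reduce the statement to a soft comparison, at an exponentially small scale, between the law of $G_n\!\cdot\! x$ under $\bb Q_s^x$ and $\pi_s$. First I would record a uniform geometric ergodicity bound for $Q_s$: there exist $s_0>0$, $C>0$ and $\rho\in(0,1)$ such that $\sup_{x\in\bb P(V)}|Q_s^n\varphi(x)-\pi_s(\varphi)|\leq C\rho^n\|\varphi\|_\gamma$ for all $s\in(-s_0,s_0)$, $n\geq1$ and $\varphi\in\mathscr B_\gamma$. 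This follows from Lemma~\ref{Ch7transfer operator}: a direct induction gives $Q_s^n\varphi=\frac{1}{\kappa^n(s)r_s}P_s^n(\varphi r_s)$, hence by \eqref{Ch7Pzn-decom}, $Q_s^n\varphi=\nu_s(\varphi r_s)+\frac{L_s^n(\varphi r_s)}{\kappa^n(s)r_s}$; here $\nu_s(\varphi r_s)=\pi_s(\varphi)$ by \eqref{ExpCon-Qs} and $\nu_s(r_s)=1$, $r_s$ is bounded above and below uniformly in $s$ (Lemma~\ref{Ch7transfer operator}(c) together with analyticity), and $\|L_s^n/\kappa^n(s)\|_{\mathscr B_\gamma\to\mathscr B_\gamma}\leq C\rho^n$ by the spectral gap, all uniformly after shrinking $s_0$. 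Since $(G_n\!\cdot\! x)_{n\geq0}$ is, under $\bb Q_s^x$, the Markov chain with transition $Q_s$ started at $x$, we get $\bb E_{\bb Q_s^x}[\varphi(G_n\!\cdot\! x)]=Q_s^n\varphi(x)$ for every bounded measurable $\varphi$.

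Next I would build, for $y\in\bb P(V^*)$ and $\epsilon\in(0,1)$, a cut-off $\varphi_{y,\epsilon}\in\mathscr B_\gamma$ with $\mathds 1_{\{x:\,\delta(x,y)\leq\epsilon\}}\leq\varphi_{y,\epsilon}\leq\mathds 1_{\{x:\,\delta(x,y)\leq2\epsilon\}}$ and $\|\varphi_{y,\epsilon}\|_\gamma\leq C\epsilon^{-\gamma}$; concretely $\varphi_{y,\epsilon}(x)=\chi(\delta(x,y)/\epsilon)$, where $\chi\colon[0,\infty)\to[0,1]$ is a fixed Lipschitz function with $\chi\equiv1$ on $[0,1]$ and $\chi\equiv0$ on $[2,\infty)$, using that $x\mapsto\delta(x,y)$ is Lipschitz for the angular distance with a constant independent of $y$. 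With $\epsilon=e^{-\ee k}$ and $n\geq k\geq1$, the first paragraph gives
\begin{align*}
\bb Q_s^x\big(\log\delta(G_n\!\cdot\! x,y)\leq-\ee k\big)
&\leq\bb E_{\bb Q_s^x}\big[\varphi_{y,\epsilon}(G_n\!\cdot\! x)\big]=Q_s^n\varphi_{y,\epsilon}(x)\\
&\leq\pi_s(\varphi_{y,\epsilon})+C\rho^n\|\varphi_{y,\epsilon}\|_\gamma
\leq\pi_s\big(\{x:\delta(x,y)\leq2\epsilon\}\big)+C\rho^n\epsilon^{-\gamma}.
\end{align*}

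For the first term, Markov's inequality and Lemma~\ref{Lem_Regu_pi_s00} yield $\pi_s(\{x:\delta(x,y)\leq2\epsilon\})\leq(2\epsilon)^{\eta}\int_{\bb P(V)}\delta(x,y)^{-\eta}\pi_s(dx)\leq C\epsilon^{\eta}$, uniformly in $s$ and $y$. Hence the left-hand side is at most $Ce^{-\eta\ee k}+C\rho^n e^{\gamma\ee k}$, and, using $n\geq k$, the second summand is $\leq e^{k(\gamma\ee+\log\rho)}$. It remains to treat the two ranges of $\ee$. If $\ee<|\log\rho|/\gamma$ then $\gamma\ee+\log\rho<0$ and the bound is $\leq Ce^{-ck}$ with $c=\min\{\eta\ee,\,|\log\rho|-\gamma\ee\}>0$. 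If $\ee\geq|\log\rho|/\gamma$, then $\{\log\delta(G_n\!\cdot\! x,y)\leq-\ee k\}\subseteq\{\log\delta(G_n\!\cdot\! x,y)\leq-\ee_0 k\}$ with $\ee_0=|\log\rho|/(2\gamma)$, and one applies the case already proved for $\ee_0$. This establishes \eqref{Regu_pi_s} with constants $c,C>0$ depending only on $\ee$, uniformly in $s\in(-s_0,s_0)$, $n\geq k\geq1$, $x$ and $y$. The main difficulty I foresee is the competition between the blow-up $\|\varphi_{y,\epsilon}\|_\gamma\asymp\epsilon^{-\gamma}$ of the cut-off and the mixing rate $\rho^n$: one must place the cut-off at an exponentially small scale and exploit the hypothesis $n\geq k$ so that $\rho^n$ beats $e^{\gamma\ee k}$ — this is precisely why $n\geq k$ is needed and why $c$ must be allowed to depend on $\ee$. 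A secondary, more routine issue is to extract genuinely $s$-uniform spectral-gap constants $\rho$ and $C$ from the analytic perturbation in Lemma~\ref{Ch7transfer operator} after shrinking $s_0$.
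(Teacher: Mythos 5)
The paper does not supply its own proof of this lemma; it is cited verbatim from~\cite{GQX20}, so there is no internal argument to compare against. Judged on its own, your proof is correct and closes the gap using only tools available in the excerpt.

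The two ingredients you isolate — the uniform geometric ergodicity $|Q_s^n\varphi(x)-\pi_s(\varphi)|\leq C\rho^n\|\varphi\|_\gamma$ deduced from the decomposition \eqref{Ch7Pzn-decom} together with $\pi_s(\varphi)=\nu_s(\varphi r_s)$ (which follows from \eqref{ExpCon-Qs} and $\nu_s(r_s)=1$, the latter a consequence of $M_s$ being a projection, item~(a) rather than~(c) of Lemma~\ref{Ch7transfer operator}), and the H\"older regularity of $\pi_s$ from Lemma~\ref{Lem_Regu_pi_s00} — are exactly the right ones. The crux is, as you say, the competition between $\|\varphi_{y,\epsilon}\|_\gamma\asymp\epsilon^{-\gamma}=e^{\gamma\ee k}$ and $\rho^n\leq\rho^k$, which is resolved by the hypothesis $n\geq k$ and by first proving the statement for $\ee$ below the threshold $|\log\rho|/\gamma$ and then reducing larger $\ee$ to that case by inclusion of events. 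Your Lipschitz bound $|\delta(x,y)-\delta(x',y)|\leq\|v-v'\|\leq\sqrt2\,d(x,x')$ for unit representatives with the appropriate sign choice is correct and yields $\|\varphi_{y,\epsilon}\|_\gamma\leq C\epsilon^{-\gamma}$, and the Markov inequality applied to $\delta(\cdot,y)^{-\eta}$ under $\pi_s$ gives the stated bound on $\pi_s(\{\delta\leq2\epsilon\})$. The only points you leave as routine — uniformity in $s$ of the spectral gap constants $C,\rho$ and the two-sided bounds on $r_s$ in $\mathscr B_\gamma$ after shrinking $s_0$ — are indeed standard consequences of the analyticity statements in Lemma~\ref{Ch7transfer operator} on a compact subinterval, and calling them routine is fair. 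In short, this is a valid, essentially self-contained proof of the cited lemma.
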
 

Note that \eqref{Regu_pi_s} is stronger than the exponential H\"{o}lder regularity of the invariant measure $\pi_s$
stated in Lemma  \ref{Lem_Regu_pi_s00}.

%%%%%%%%%%%%%%%%%%%%%%%%%%%%%%%%%%%%%%%%%%%%%%%%%%%%%%%%
\subsection{Proof of Theorem \ref{Thm-Edge-Expan-Coeff001}} \label{sec-proof of Edgeworth exp}
In fact we shall prove a more general version of 
Theorem \ref{Thm-Edge-Expan-Coeff001} 
under the changed measure $\mathbb{Q}_{s}^{x}$.
The proof for the case $s =0$ requires the same effort, so we decide to consider the more general setting.   
For any $s \in (-s_0, s_0)$ and $\varphi \in \mathscr{B}_{\gamma}$,   define 
\begin{align}\label{Def-bsvarphi}
b_{s, \varphi}(x): = \lim_{n \to \infty}
   \mathbb{E}_{\mathbb{Q}_{s}^{x}} \big[ ( \sigma(G_n, x) - n \Lambda'(s) ) \varphi(G_n \!\cdot\! x) \big],
\quad   x \in \bb P(V) 
\end{align}
and
\begin{align} \label{drift-d001bis}
d_{s,\varphi}(y)=  \int_{\bb P(V)} \varphi(x) \log \delta(x, y) \pi_s(dx), \quad  y \in \bb P(V^*). 
\end{align}
These functions are well-defined and $\gamma$-H\"older continuous, as shown in Lemmas \ref{Lem-Bs} and \ref{Lem-ds} below.
In particular, we have $b_{0,\varphi} = b_{\varphi}$ and $d_{0,\varphi} = d_{\varphi}$,
 where $b_{\varphi}$ and $d_{\varphi}$ are defined in \eqref{drift-b001} and \eqref{drift-d001}, respectively. 

Our goal of this subsection is to establish the following first-order Edgeworth expansion for the coefficients $\langle f, G_n v \rangle$
 under the changed measure $\mathbb{Q}_{s}^x$. 
 Note that $\sigma_s = \sqrt{\Lambda''(s)}$, which is strictly positive under \ref{Ch7Condi-Moment} and \ref{Ch7Condi-IP}. 

\begin{theorem}\label{Thm-Edge-Expan-Coeff001extended}
Assume \ref{Ch7Condi-Moment} and \ref{Ch7Condi-IP}. 
Then, 
for any $\ee > 0$,
 there exist $\gamma >0$ and $s_0 > 0$ such that uniformly in $s \in (-s_0, s_0)$, 
 $t \in \bb R$, $x=\bb R v \in \bb P(V)$, $ y = \bb R f \in \bb P(V^*)$ with $\|v\| = \|f\| =1$, and
  $\varphi \in \mathscr{B}_{\gamma}$,  as $n\to \infty$,
\begin{align*}
&  \mathbb{E}_{\mathbb{Q}_{s}^x}
   \Big[  \varphi(G_n \!\cdot\! x) \mathds{1}_{ \big\{ \frac{\log |\langle f, G_n v \rangle| - n \Lambda'(s)  }{\sigma_s \sqrt{n}} \leq t \big\} } \Big]
  = \pi_s(\varphi) \Big[  \Phi(t) + \frac{\Lambda'''(s)}{ 6 \sigma_s^3 \sqrt{n}} (1-t^2) \phi(t) \Big]
     \notag\\
& \qquad\qquad\qquad\qquad\qquad - \frac{ b_{s,\varphi}(x) + d_{s,\varphi}(y) }{ \sigma_s \sqrt{n} } \phi(t) 
 +  \pi_s(\varphi)  o \Big( \frac{ 1 }{\sqrt{n}} \Big)  +  \lVert \varphi \rVert_{\gamma} O \Big( \frac{ 1 }{n^{1-\ee} }\Big).     
\end{align*}
\end{theorem}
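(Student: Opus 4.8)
The plan is to combine the identity
\[
\log|\langle f,G_nv\rangle|=\sigma(G_n,x)+\log\delta(G_n\!\cdot\! x,y)
\]
with the refined Edgeworth expansion for the couple $(\varphi(G_n\!\cdot\! x),\sigma(G_n,x))$ under $\bb Q_s^x$ of Theorem \ref{Thm-Edge-Expan}, whose remainder is controlled explicitly in terms of $\|\varphi\|_{\gamma}$, applied to a large but finite family of H\"older targets. Since $\log\delta(\cdot,y)\leq 0$, I would fix a mesh $\rho_n\downarrow 0$ (a small negative power of $n$) and a truncation level $K_n$ of order $(\log n)/\rho_n$, and build a $\gamma$-H\"older partition of unity $(\chi_{n,k}^y)_{0\leq k\leq K_n}$ on $\bb P(V)$ subordinate to the sets $\{x:-\log\delta(x,y)\in[(k-1)\rho_n,(k+1)\rho_n]\}$. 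In contrast to \cite{DKW21}, the mesh must shrink with $n$ — this is what ultimately produces the term $d_{s,\varphi}(y)$, cf.\ Lemma \ref{new bound for delta020} — while $K_n$ is chosen so that $\sum_{k>K_n}\chi_{n,k}^y$ is negligible, by the exponential decay of $\pi_s(\{-\log\delta(\cdot,y)>m\})$ (Lemma \ref{Lem_Regu_pi_s00}) and of $\bb Q_s^x(\log\delta(G_n\!\cdot\! x,y)\leq-\ee k)$ (Lemma \ref{Lem_Regu_pi_s}), both uniform in $s$. One checks $\|\chi_{n,k}^y\|_{\gamma}\leq C(e^{k\rho_n}/\rho_n)^{\gamma}$, which is only a small power of $n$ over $k\leq K_n$ once $\gamma$ is taken small.

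For each $k$, I would write $\bb E_{\bb Q_s^x}[\varphi(G_n\!\cdot\! x)\chi_{n,k}^y(G_n\!\cdot\! x)\mathds{1}_{\{\sigma(G_n,x)+\log\delta(G_n\cdot x,y)\leq z_n(t)\}}]$ with $z_n(t)=n\Lambda'(s)+t\sigma_s\sqrt n$, and on the support of $\chi_{n,k}^y$ replace $\log\delta(G_n\!\cdot\! x,y)$ by its central value $-k\rho_n$, at the cost of an indicator supported on a $\sigma$-band of width $2\rho_n$. Theorem \ref{Thm-Edge-Expan} applied to the target $\varphi\chi_{n,k}^y$ at the shifted level $t_k=t+k\rho_n/(\sigma_s\sqrt n)$ then gives, for this piece,
\[
\pi_s(\varphi\chi_{n,k}^y)\Big[\Phi(t_k)+\tfrac{\Lambda'''(s)}{6\sigma_s^3\sqrt n}(1-t_k^2)\phi(t_k)\Big]-\tfrac{b_{s,\varphi\chi_{n,k}^y}(x)}{\sigma_s\sqrt n}\phi(t_k)+\pi_s(\varphi\chi_{n,k}^y)\,o(\tfrac1{\sqrt n})+\|\varphi\chi_{n,k}^y\|_{\gamma}\,O(\tfrac1n).
\]
Taylor-expanding $\Phi$, $\phi$ and the cubic term around $t$ (the shift $k\rho_n/(\sigma_s\sqrt n)$ being of size $n^{-1/2+o(1)}$ on the relevant range) and summing over $0\leq k\leq K_n$, three main contributions appear: $\sum_k\pi_s(\varphi\chi_{n,k}^y)=\pi_s(\varphi)$ up to an exponentially small tail, yielding $\pi_s(\varphi)[\Phi(t)+\tfrac{\Lambda'''(s)}{6\sigma_s^3\sqrt n}(1-t^2)\phi(t)]$; the first-order term $\sum_k\pi_s(\varphi\chi_{n,k}^y)\tfrac{k\rho_n}{\sigma_s\sqrt n}\phi(t)$, which is a Riemann sum with error $O(\rho_n/\sqrt n)$ for $\tfrac{\phi(t)}{\sigma_s\sqrt n}\int_{\bb P(V)}\varphi(x)(-\log\delta(x,y))\,\pi_s(dx)=-\tfrac{d_{s,\varphi}(y)}{\sigma_s\sqrt n}\phi(t)$; and $\sum_k b_{s,\varphi\chi_{n,k}^y}(x)=b_{s,\varphi}(x)$ by linearity of $\varphi\mapsto b_{s,\varphi}$ (Lemma \ref{Lem-Bs}), again up to a tail killed by Lemma \ref{Lem_Regu_pi_s} together with the $L^2$-growth of $\sigma(G_n,x)-n\Lambda'(s)$.

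It then remains to gather the error terms: the second-order Taylor remainders sum to $O(1/n)$ (using that $(\log\delta(\cdot,y))^2$ is $\pi_s$-integrable, again by Lemma \ref{Lem_Regu_pi_s00}); the $\sigma$-band errors, bounded by applying Theorem \ref{Thm-Edge-Expan} to $\chi_{n,k}^y$ at the two endpoints of each band, sum to $\|\varphi\|_\gamma[O(\rho_n/\sqrt n)+(\text{poly loss})/n]$; and the summed couple-Edgeworth remainders contribute $\pi_s(\varphi)\,o(1/\sqrt n)+\|\varphi\|_\gamma\cdot(\text{poly loss})/n$. Choosing $\gamma$ and $\rho_n$ small enough in terms of $\ee$ turns every ``$(\text{poly loss})/n$'' and every $O(\rho_n/\sqrt n)$ into $O(n^{-(1-\ee)})$, which yields the claim; specializing to $s=0$ gives Theorem \ref{Thm-Edge-Expan-Coeff001}. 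The hard part is exactly this simultaneous balancing: the partition must be fine ($\rho_n\to0$) for the discrete sum of shifts to converge to the integral defining $d_{s,\varphi}(y)$, yet $\gamma$ small and $\rho_n$ not too small so that the number $K_n$ of pieces, the growing norms $\|\chi_{n,k}^y\|_\gamma$, and the accumulated remainders stay below $\|\varphi\|_\gamma n^{-(1-\ee)}$; extracting the \emph{exact} constant $d_{s,\varphi}(y)$, rather than a distorted version, while controlling the interpolation errors in the indicator, is what forces both the quantitative, $\|\varphi\|_\gamma$-dependent remainder of Theorem \ref{Thm-Edge-Expan} and the use of the H\"older regularity of $\pi_s$ (of $\nu$ when $s=0$).
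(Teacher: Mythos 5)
Your proposal follows essentially the same route as the paper: the cocycle decomposition $\log|\langle f,G_nv\rangle|=\sigma(G_n,x)+\log\delta(G_n\!\cdot\! x,y)$, a shrinking partition of unity in $-\log\delta(\cdot,y)$ truncated at a poly-log level, piecewise application of the $\|\varphi\|_\gamma$-explicit Edgeworth expansion of Theorem \ref{Thm-Edge-Expan}, Riemann-sum extraction of $d_{s,\varphi}(y)$ via the H\"older regularity of $\pi_s$, and linearity of $\varphi\mapsto b_{s,\varphi}$. The only departures are cosmetic (polynomial mesh $\rho_n$ in place of $a_n=1/\log n$, and a symmetric band-error argument in place of the paper's one-sided sandwiching via $-\log\delta\in[(k-1)a_n,(k+1)a_n]$), and they do not change the substance or the balance of parameters.
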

Theorem \ref{Thm-Edge-Expan-Coeff001} 
follows from Theorem \ref{Thm-Edge-Expan-Coeff001extended} by taking $s=0$.

The remaining part of the paper is devoted to establishing Theorem \ref{Thm-Edge-Expan-Coeff001extended}. 
We begin with some properties of the function $b_{s, \varphi}$ (cf.\ \eqref{Def-bsvarphi}). % proved recently in \cite{XGL19b}. 
\begin{lemma}[\cite{XGL19b}] \label{Lem-Bs}
Assume \ref{Ch7Condi-Moment} and \ref{Ch7Condi-IP}. 
Then, 
%there exists a constant $s_0 >0$ such that for any $s\in (-s_0, s_0)$, the function $b_{s,\varphi}$  
%is well-defined and
%%\begin{align} \label{Def2-bs}
%%b_{s,\varphi}(x) = \left. \frac{ d \Pi_{s,z} }{ dz } \right|_{z=0} \varphi(x),  \quad  x \in \bb P(V). 
%%\end{align}
%%Moreover, 
there exist constants $s_0 >0$, $\gamma>0$ and $c>0$ such that $b_{s,\varphi}  \in \mathscr{B}_{\gamma}$ 
and $\| b_{s,\varphi}\|_{\gamma}  \leq  c \| \varphi \| _{\gamma}$ for any $s\in (-s_0, s_0)$.
\end{lemma}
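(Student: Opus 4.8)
The plan is to rewrite the expectation under $\bb Q_s^x$ through the perturbed transfer operators and to read off the limit from the spectral decomposition \eqref{Ch7Pzn-decom}. Fix $\gamma>0$ and shrink $s_0>0$ so that Lemma~\ref{Ch7transfer operator} applies. First I would note that, by the cocycle identity $P_z^n\psi(x)=\bb E[e^{z\sigma(G_n,x)}\psi(G_n \!\cdot\! x)]$ and the change of measure formula \eqref{Ch7basic equ1} applied with $h=e^{z\sigma(G_n,x)}\varphi(G_n \!\cdot\! x)$, one has for $s\in(-s_0,s_0)$ and $z$ in a small complex neighbourhood of $0$
\[
\bb E_{\bb Q_s^x}\!\big[e^{z\sigma(G_n,x)}\varphi(G_n \!\cdot\! x)\big]=\frac{P_{s+z}^n(\varphi\, r_s)(x)}{\kappa^n(s)\, r_s(x)}.
\]
Setting $\Psi_n(z):=e^{-nz\Lambda'(s)}\,\bb E_{\bb Q_s^x}[e^{z\sigma(G_n,x)}\varphi(G_n \!\cdot\! x)]$, which is analytic in $z$ near $0$, one gets $\Psi_n'(0)=\bb E_{\bb Q_s^x}[(\sigma(G_n,x)-n\Lambda'(s))\varphi(G_n \!\cdot\! x)]$; this is exactly the quantity whose $n\to\infty$ limit defines $b_{s,\varphi}(x)$, so the task becomes to evaluate $\lim_n\Psi_n'(0)$.

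The second step is to insert \eqref{Ch7Pzn-decom} and write, with $g_s(z):=\Lambda(s+z)-\Lambda(s)-z\Lambda'(s)$ (so $g_s(0)=g_s'(0)=0$),
\[
\Psi_n(z)=e^{n g_s(z)}\,\frac{\nu_{s+z}(\varphi\, r_s)\, r_{s+z}(x)}{r_s(x)}+\frac{e^{-nz\Lambda'(s)}}{\kappa^n(s)\, r_s(x)}\,L_{s+z}^n(\varphi\, r_s)(x)=:A_n(z)+R_n(z).
\]
Differentiating $A_n$ at $0$, the contribution $n g_s'(z)e^{n g_s(z)}$ vanishes because $g_s'(0)=0$, leaving the $n$-free value $A_n'(0)=\dot\nu_s(\varphi\, r_s)+\nu_s(\varphi\, r_s)\,\dot r_s(x)/r_s(x)$, where $\dot\nu_s$ and $\dot r_s$ denote the $z$-derivatives at $0$ of $\nu_{s+z}$ and $r_{s+z}$. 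For the remainder I would use that $z\mapsto L_z$ is analytic into $\mathscr{L(B_{\gamma},B_{\gamma})}$ and that the spectral radius of $L_z$ is strictly smaller than $|\kappa(z)|$, uniformly for $z$ near $0$ (part of Lemma~\ref{Ch7transfer operator}); hence $\kappa^{-n}(s)\|L_s^n\|_{\mathscr{B}_{\gamma}\to\mathscr{B}_{\gamma}}$ decays geometrically, and a Cauchy estimate on a fixed small circle gives the same decay for $\kappa^{-n}(s)\,\big\|\frac{d}{dz}\big|_{z=0}L_{s+z}^n\big\|_{\mathscr{B}_{\gamma}\to\mathscr{B}_{\gamma}}$, so that $R_n'(0)\to0$. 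This yields both the existence of the limit and the closed form
\[
b_{s,\varphi}(x)=\dot\nu_s(\varphi\, r_s)+\nu_s(\varphi\, r_s)\,\frac{\dot r_s(x)}{r_s(x)}.
\]

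Finally, I would deduce $\|b_{s,\varphi}\|_{\gamma}\leq c\|\varphi\|_{\gamma}$ directly from this formula. The first summand does not depend on $x$, so its $\gamma$-Hölder seminorm is $0$ and its $\gamma$-norm equals $|\dot\nu_s(\varphi\, r_s)|\leq\|\dot\nu_s\|_{\mathscr{B}_{\gamma}'}\|r_s\|_{\gamma}\|\varphi\|_{\gamma}$, using that $\mathscr{B}_{\gamma}$ is an algebra with $\|\psi_1\psi_2\|_{\gamma}\leq\|\psi_1\|_{\gamma}\|\psi_2\|_{\gamma}$. For the second summand, $r_s\in\mathscr{B}_{\gamma}$ is strictly positive and bounded below on the compact space $\bb P(V)$, hence $1/r_s\in\mathscr{B}_{\gamma}$, and together with $\dot r_s\in\mathscr{B}_{\gamma}$ this makes $x\mapsto\dot r_s(x)/r_s(x)$ an element of $\mathscr{B}_{\gamma}$ with norm bounded uniformly in $s$; combined with $|\nu_s(\varphi\, r_s)|\leq\|\nu_s\|_{\mathscr{B}_{\gamma}'}\|r_s\|_{\gamma}\|\varphi\|_{\gamma}$, multiplying gives the claimed bound. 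All the auxiliary constants $\|\dot\nu_s\|_{\mathscr{B}_{\gamma}'}$, $\|\nu_s\|_{\mathscr{B}_{\gamma}'}$, $\|r_s\|_{\gamma}$, $\|1/r_s\|_{\gamma}$, $\|\dot r_s\|_{\gamma}$ are uniformly bounded for $s\in[-s_0,s_0]$ by continuity of the analytic maps in Lemma~\ref{Ch7transfer operator}, after shrinking $s_0$ if necessary. The step I expect to require the most care is the uniform-in-$s$ control of $R_n'(0)$ and of these Hölder norms; granted the spectral picture of Lemma~\ref{Ch7transfer operator}, the remaining arguments are routine — consistent with the fact that this lemma is quoted from \cite{XGL19b}.
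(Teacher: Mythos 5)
The lemma is quoted from \cite{XGL19b} and the paper gives no proof, so there is nothing internal to compare against; nevertheless your argument is correct and is the standard route. The change-of-measure identity $\bb E_{\bb Q_s^x}[e^{z\sigma(G_n,x)}\varphi(G_n \!\cdot\! x)] = \kappa^{-n}(s)r_s(x)^{-1}P_{s+z}^n(\varphi r_s)(x)$, the extraction of $b_{s,\varphi}(x)$ as $\lim_n \Psi_n'(0)$, the split via \eqref{Ch7Pzn-decom} into the rank-one part and the geometrically decaying remainder $L_{s+z}^n$, and the Cauchy estimate on a small circle to kill $R_n'(0)$ all go through exactly as you describe — this is how such asymptotic-bias formulas are obtained in the spectral-gap literature (and in \cite{XGL19b} itself). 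Two small points worth flagging but not gaps: (i) \eqref{Ch7basic equ1} is stated for bounded $h$, so for complex $z$ you should invoke the exponential moment \ref{Ch7Condi-Moment} (or a truncation/analytic-continuation argument) to legitimize the identity for $|z|$ small; and (ii) the consistency of the closed form $b_{s,\varphi}(x)=\dot\nu_s(\varphi r_s)+\nu_s(\varphi r_s)\dot r_s(x)/r_s(x)$ with \eqref{ExpCon-Qs} uses the normalization $\nu_s(r_s)=1$, which is implicit in Lemma~\ref{Ch7transfer operator}(a) since $M_s$ is a projection. With those noted, the Hölder-norm bound you derive — constant first summand plus the product $\nu_s(\varphi r_s)\cdot \dot r_s/r_s$, using the algebra property of $\mathscr B_\gamma$, strict positivity of $r_s$ on the compact $\bb P(V)$, and uniformity in $s$ from analyticity after shrinking $s_0$ — is sound.
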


In addition to Lemma \ref{Lem-Bs}, we shall need the following result on the function $d_{s,\varphi}$ defined in \eqref{drift-d001bis}. 
\begin{lemma} \label{Lem-ds}
Assume \ref{Ch7Condi-Moment} and \ref{Ch7Condi-IP}.
Then, there exists $s_0 >0$ such that for any $s\in (-s_0, s_0)$, the function 
$d_{s,\varphi}$ is well-defined. 
Moreover, there exist constants $\gamma>0$ and $c>0$ such that $d_{s,\varphi}  \in \mathscr{B}^*_{\gamma}$ 
and $\| d_{s,\varphi}\|_{\gamma}  \leq  c \| \varphi \|_{\infty}$ for any $s\in (-s_0, s_0)$.
\end{lemma}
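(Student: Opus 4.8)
The plan is to treat the two assertions of the lemma separately: finiteness of the defining integral (hence well-definedness) together with the bound on $\sup_{y}|d_{s,\varphi}(y)|$, and then the $\gamma$-Hölder estimate in the variable $y$. Both are reduced to controlling the logarithmic singularity of the map $x\mapsto\log\delta(x,y)$ along the set $\{\langle f,v\rangle=0\}$, for which the only input needed is the quantitative regularity of $\pi_s$ furnished by Lemma \ref{Lem_Regu_pi_s00}. For the first part, since $0<\delta(x,y)\leq1$ one has the elementary inequality $|\log u|\leq(\eta e)^{-1}u^{-\eta}$ for all $u\in(0,1]$ and $\eta>0$, hence $|\varphi(x)\log\delta(x,y)|\leq(\eta e)^{-1}\|\varphi\|_\infty\,\delta(x,y)^{-\eta}$; integrating in $x$ and invoking $\sup_{s\in(-s_0,s_0)}\sup_{y\in\bb P(V^*)}\int_{\bb P(V)}\delta(x,y)^{-\eta}\pi_s(dx)<\infty$ from Lemma \ref{Lem_Regu_pi_s00} shows at once that $d_{s,\varphi}(y)$ is well-defined and that $\sup_{y\in\bb P(V^*)}|d_{s,\varphi}(y)|\leq c\|\varphi\|_\infty$, uniformly in $s\in(-s_0,s_0)$.

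For the Hölder bound, fix $y=\bb R f$ and $y'=\bb R f'$ in $\bb P(V^*)$ and pick the unit representatives $f,f'$ with $\langle f,f'\rangle\geq0$, so that $\|f-f'\|\leq\sqrt2\,d(y,y')$; writing $\delta(x,y)=|f(v)|$ and $\delta(x,y')=|f'(v)|$ for the unit $v$ representing $x$, this yields $|\delta(x,y)-\delta(x,y')|\leq\|f-f'\|\leq\sqrt2\,d(y,y')=:\rho$. Since $d_{s,\varphi}(y)-d_{s,\varphi}(y')=\int_{\bb P(V)}\varphi(x)\bigl(\log\delta(x,y)-\log\delta(x,y')\bigr)\pi_s(dx)$, it is enough to show $I:=\int_{\bb P(V)}|\log\delta(x,y)-\log\delta(x,y')|\pi_s(dx)\leq c\,\rho^{\gamma}$ for some $\gamma>0$ and $\rho$ small — the case of $\rho$ bounded away from $0$ being covered by $I\leq 2c$, which follows from the first paragraph via $|\log\delta(x,y)-\log\delta(x,y')|\leq|\log\delta(x,y)|+|\log\delta(x,y')|$. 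I would split $\bb P(V)$ according to whether $m(x):=\min\{\delta(x,y),\delta(x,y')\}\geq\rho^{1/2}$ or $m(x)<\rho^{1/2}$. On the first set, $|\log a-\log b|\leq|a-b|/\min\{a,b\}$ gives $|\log\delta(x,y)-\log\delta(x,y')|\leq\rho/m(x)\leq\rho^{1/2}$, whose $\pi_s$-integral is $\leq\rho^{1/2}$. On the second set, bound $|\log\delta(x,y)-\log\delta(x,y')|\leq|\log\delta(x,y)|+|\log\delta(x,y')|$ and combine $\mathds 1_{\{\delta<r\}}\leq(r/\delta)^{\theta}$ with $|\log\delta|\leq(\theta'e)^{-1}\delta^{-\theta'}$, choosing $\theta,\theta'>0$ with $\theta+\theta'\leq\eta$; together with Lemma \ref{Lem_Regu_pi_s00} and the inclusions $\{\delta(x,y)<\rho^{1/2}\}\subseteq\{\delta(x,y')<3\rho^{1/2}\}$ and symmetrically (valid for small $\rho$ since $|\delta(x,y)-\delta(x,y')|\leq\rho$), this bounds the corresponding part of $I$ by $c\,\rho^{\theta/2}$. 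Hence $I\leq\rho^{1/2}+c\,\rho^{\theta/2}\leq c'\,d(y,y')^{\gamma}$ with $\gamma:=\tfrac12\min\{1,\theta\}$, so $[d_{s,\varphi}]_{\gamma}\leq c'\|\varphi\|_\infty$ uniformly in $s$. Combining with the sup-norm estimate gives $\|d_{s,\varphi}\|_{\mathscr B^{*}_{\gamma}}\leq c\|\varphi\|_\infty$ for all $s\in(-s_0,s_0)$, which is the assertion.

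The only real obstacle is the logarithmic singularity of $\log\delta(x,y)$ on the hyperplane $\{\langle f,v\rangle=0\}$: the naive Lipschitz-in-the-argument estimate $|\log a-\log b|\leq|a-b|/\min\{a,b\}$ blows up there, and it is exactly the quantitative Hölder regularity $\sup_{s,y}\int\delta(x,y)^{-\eta}\pi_s(dx)<\infty$ of Lemma \ref{Lem_Regu_pi_s00}, together with the balance achieved by cutting at level $\rho^{1/2}$ (so that the gain $\rho^{1/2}$ on the good set matches the loss $\rho^{\theta/2}$ coming from the region where $\delta$ is small), that tames it. The uniformity in $s\in(-s_0,s_0)$ is inherited verbatim from the uniformity in Lemma \ref{Lem_Regu_pi_s00}, and $\varphi$ enters only through $\|\varphi\|_\infty$ in every step, as required for the stated bound.
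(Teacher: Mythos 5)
Your proof is correct, but the Hölder estimate is obtained by a genuinely different decomposition than the paper's. The paper derives a \emph{pointwise} bound on the integrand $|\log\delta(x,y')-\log\delta(x,y'')|$: it splits according to whether the relative difference $|\delta(x,y')-\delta(x,y'')|/\delta(x,y'')$ exceeds $1/2$, and on the ``small relative difference'' set it performs a multiplicative factorization $|\!\cdot\!|^{1-\gamma}|\!\cdot\!|^\gamma$, combining the logarithmic bounds $-\gamma\log\delta\leq\delta^{-\gamma}$ with $|\log(1+a)|\leq 2|a|$ to arrive at $|\log\delta(x,y')-\log\delta(x,y'')|\leq c_\gamma\bigl(\delta(x,y')^{-2\gamma}+\delta(x,y'')^{-2\gamma}\bigr)\,|\delta(x,y')-\delta(x,y'')|^\gamma$, and only then integrates against $\pi_s$. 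You instead split the \emph{integration domain} $\bb P(V)$ at the absolute threshold $\min\{\delta(x,y),\delta(x,y')\}\gtrless\rho^{1/2}$ (with $\rho\asymp d(y,y')$), using the mean-value Lipschitz bound $|\log a-\log b|\leq|a-b|/\min\{a,b\}$ away from the singular hyperplane and a Markov/Chebyshev estimate $\mathds{1}_{\{\delta<r\}}\leq(r/\delta)^{\theta}$ together with Lemma \ref{Lem_Regu_pi_s00} near it, with the exponent $1/2$ chosen to balance the two contributions. Both routes reduce the whole matter to the same key input, the uniform finiteness of $\int\delta(x,y)^{-\eta}\pi_s(dx)$, and both yield the required uniformity in $s$ and the linear dependence on $\|\varphi\|_\infty$. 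The paper's version has the modest advantage of producing a reusable pointwise inequality for $\log\delta$; yours is arguably more transparent, making explicit that the only enemy is the region where $\delta$ is small and showing how the regularity of $\pi_s$ quantitatively tames it. One tiny remark: the paper normalizes to $\varphi\geq 0$ at the outset, which you omit, but this is harmless since in every step you only use $|\varphi|\leq\|\varphi\|_\infty$.
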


\begin{proof}
Without loss of generality, we assume that $\varphi$ is non-negative. 
Since $\log a\leq a$ for any $a\geq 0$ (with the convention that $\log 0 = -\infty$), 
we have that for any $\eta\in (0,1)$, 
\begin{align}\label{Inequlity-log-eta}
- \eta \log \delta(x, y) \leq \delta(x, y)^{-\eta},  
\end{align}
so that
\begin{align*} %\label{}
-d_{s,\varphi}(y) 
\leq \frac{\| \varphi \| _{\infty}}{\eta} \int_{\bb P(V)} \frac{1}{ \delta(x, y)^{\eta} } \pi_s(dx). 
\end{align*}
Choosing $\eta$ small enough,  by Lemma \ref{Lem_Regu_pi_s00}, the latter integral is bounded by some constant uniformly in $y\in \bb P(V^*)$
and $s \in (-s_0, s_0)$,
which proves that $d_{s,\varphi}$ is well-defined and $\|d_{s,\varphi}\|_{\infty} \leq c \| \varphi \|_{\infty}$
for some constant $c>0$.

To estimate $[d_{s,\varphi}]_{\gamma}$, we first note that for any 
$y'=\bb R f'\in \bb P(V^*)$, $y''=\bb R f''\in \bb P(V^*)$ and any $\gamma>0$, 
\begin{align*}%\label{}
\left| \log \delta(x, y')-\log \delta(x, y'') \right| 
&= \left| \log \delta(x, y')-\log \delta(x, y'') \right| 
   \mathds 1_{ \left\{ \left| \frac{\delta(x, y')-\delta(x, y'')}{\delta(x, y'')} \right|^{\gamma} > \frac{1}{2^{\gamma}} \right\} }   \notag\\
 & \quad    + \left| \log \delta(x, y')-\log \delta(x, y'') \right| 
           \mathds 1_{ \big\{ \big|\frac{\delta(x, y')-\delta(x, y'')}{\delta(x, y'')} \big| \leq \frac{1}{2} \big\} }   \notag\\
 & =: I_1 + I_2. 
\end{align*}
For $I_1$, we easily get 
\begin{align*}%\label{}
I_1 \leq   2^{\gamma}\left( \left| \log \delta(x, y') \right| + \left| \log \delta(x, y'') \right| \right) 
    \left|\frac{\delta(x, y')-\delta(x, y'')}{\delta(x, y'')} \right|^{\gamma}. 
\end{align*}
For $I_2$, since $|\log(1+a)| \leq  2 |a|$ for any $|a| \leq \frac{1}{2}$,  
we deduce that 
%for any 
%$y'=\bb R f'\in \bb P(V^*)$, $y''=\bb R f''\in \bb P(V^*)$ and any $\gamma>0$, 
\begin{align*}%\label{}
I_2  & =   \left| \log \delta(x, y')-\log \delta(x, y'') \right|^{1 - \gamma}
\left| \log \left[ 1 + \frac{ \delta(x, y') - \delta(x, y'') }{ \delta(x, y'') } \right]   \right|^{\gamma}
      \mathds 1_{ \big\{ \big|\frac{\delta(x, y')-\delta(x, y'')}{\delta(x, y'')} \big| \leq \frac{1}{2} \big\} }   \notag\\
& \leq 2^{\gamma} \left| \log \delta(x, y')-\log \delta(x, y'') \right|^{1-\gamma} 
   \left|\frac{\delta(x, y')-\delta(x, y'')}{\delta(x, y'')} \right|^{\gamma}.
\end{align*}
Therefore, 
\begin{align*} %\label{}
\left| \log \delta(x, y')-\log \delta(x, y'') \right| 
&\leq  2^{\gamma}\left( \left| \log \delta(x, y') \right| + \left| \log \delta(x, y'') \right| \right) 
    \left|\frac{\delta(x, y')-\delta(x, y'')}{\delta(x, y'')} \right|^{\gamma} \\
& \quad +  2^{\gamma} \left| \log \delta(x, y')-\log \delta(x, y'') \right|^{1-\gamma} 
   \left|\frac{\delta(x, y')-\delta(x, y'')}{\delta(x, y'')} \right|^{\gamma}.
\end{align*}
By \eqref{Inequlity-log-eta}, it holds that $- \gamma \log \delta(x, y) \leq \delta(x, y)^{-\gamma}$ for any $\gamma \in (0,1)$. Hence
there exists a constant $c_{\gamma}>0$ such that 
\begin{align*} %\label{}
&|\log \delta(x, y')-\log \delta(x, y'')| \\
&\leq c_{\gamma} \left(\delta(x, y')^{-\gamma} \delta(x, y'')^{-\gamma} + \delta(x, y'')^{ -2\gamma} \right) 
   \left|\delta(x, y')-\delta(x, y'')\right|^ {\gamma}\\
& \quad + c_{\gamma} \left(\delta(x, y')^{-\gamma (1-\gamma) } \delta(x, y'')^{-\gamma} + \delta(x, y'')^{-\gamma (1-\gamma)-\gamma} \right) 
   \left|\delta(x, y')-\delta(x, y'')\right|^ {\gamma}\\
&\leq c_{\gamma} \left(\delta(x, y')^{-\gamma} \delta(x, y'')^{-\gamma} + \delta(x, y'')^{-2\gamma} \right) 
    \left|\delta(x, y')-\delta(x, y'')\right|^ {\gamma}\\
&\leq c_{\gamma} \left(\delta(x, y')^{-2\gamma} +  \delta(x, y'')^{-2\gamma}  \right) 
   \left|\delta(x, y')-\delta(x, y'')\right|^ {\gamma}.
\end{align*}
Since $\| \frac{f'}{\| f' \|} - \frac{f''}{\| f \|} \| \leq \sqrt{2} d(y',y'')$ where $d(y',y'')$ 
is the angular distance on $\bb P(V^*)$,
we have
$$ 
\left|\delta(x, y')-\delta(x, y'')\right|= \left|\frac{ \langle f', v \rangle }{\|v\| \|f'\|} - \frac{ \langle f'', v \rangle }{\|v\| \|f''\|}\right| 
\leq \Big\| \frac{f'}{\| f' \|} - \frac{f''}{\| f \|} \Big\| 
\leq \sqrt{2} d(y',y''). 
$$
By the definition of the function $d_{s,\varphi}$, using the above bounds, we obtain
\begin{align*} %\label{}
\frac{| d_{s,\varphi}(y')-d_{s,\varphi}(y'')|}{d(y',y'')^{\gamma}} 
\leq c_{\gamma} \| \varphi \|_{\infty}  
\int_{\bb P(V)} \left(\delta(x, y')^{-2\gamma} + \delta(x, y'')^{-2\gamma}  \right) \pi_s(dx).
\end{align*}
By Lemma \ref{Lem_Regu_pi_s00}, 
the last integral is bounded by some constant uniformly in $y',y''\in \bb P(V^*)$ and $s \in (-s_0, s_0)$,
by choosing $\gamma >0$ sufficiently small.  
This, together with the fact that $\|d_{s,\varphi}\|_{\infty} \leq c \| \varphi \|_{\infty}$, 
proves that 
$d_{s,\varphi} \in \scr B_{\gamma}^*$ and $\| d_{s,\varphi}\|_{\gamma}  \leq  c \| \varphi \|_{\infty}$.
\end{proof}

In the proof of Theorem \ref{Thm-Edge-Expan-Coeff001extended}
we shall make use of the following Edgeworth expansion for the couple $(G_n \cdot x, \sigma(G_n, x))$
with a target function $\varphi$ on $G_n \cdot x$, which slightly improves \cite[Theorem 5.3]{XGL19b} 
by giving more accurate reminder terms. 
This improvement will be important for establishing Theorem \ref{Thm-Edge-Expan-Coeff001extended}. 

\begin{theorem}\label{Thm-Edge-Expan}
Assume \ref{Ch7Condi-Moment} and \ref{Ch7Condi-IP}. 
Then, there exist  constants $s_0 >0$ and $\gamma >0$ such that,   
as $n \to \infty$, uniformly in   $s \in (-s_0, s_0)$, 
$x \in \bb P(V)$, $t \in \bb R$ and $\varphi \in \mathscr{B}_{\gamma}$,  
\begin{align*}
  \mathbb{E}_{\mathbb{Q}_s^x}
   \Big[  \varphi(G_n \cdot x) \mathds{1}_{ \big\{ \frac{\sigma(G_n, x) - n \Lambda'(s) }{\sigma_s \sqrt{n}} \leq t \big\} } \Big]
 &  =  \pi_s(\varphi) \Big[  \Phi(t) + \frac{\Lambda'''(s)}{ 6 \sigma_s^3 \sqrt{n}} (1-t^2) \phi(t) \Big]
    -  \frac{ b_{s,\varphi}(x) }{ \sigma_s \sqrt{n} } \phi(t)     \notag\\
&  \quad  +  \pi_s(\varphi)  o \Big( \frac{ 1 }{\sqrt{n}} \Big)  +  \lVert \varphi \rVert_{\gamma} O \Big( \frac{ 1 }{n} \Big).     
\end{align*}
\end{theorem}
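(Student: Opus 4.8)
The plan is to follow the standard Fourier-analytic (smoothing/Esseen) route adapted to the spectral setting of Lemma \ref{Ch7transfer operator}, but keeping careful track of how the remainder depends on $\|\varphi\|_\gamma$. First I would express the quantity of interest via the transfer operator: for $\theta\in\bb R$,
\begin{align*}
\bb E_{\bb Q_s^x}\big[\varphi(G_n\!\cdot\!x)\, e^{i\theta\,\sigma(G_n,x)}\big]
= \frac{1}{\kappa^n(s) r_s(x)}\, P_{s+i\theta}^n(\varphi r_s)(x),
\end{align*}
using the change of measure \eqref{Ch7basic equ1}. Applying the decomposition \eqref{Ch7Pzn-decom} at $z=s+i\theta$ for $|\theta|$ small gives a leading term $\big(\kappa(s+i\theta)/\kappa(s)\big)^n \nu_{s+i\theta}(\varphi r_s)\, r_{s+i\theta}(x)/r_s(x)$ plus the exponentially small contribution of $L_{s+i\theta}^n$. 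Writing $\kappa(s+i\theta)/\kappa(s)=\exp\{\Lambda(s+i\theta)-\Lambda(s)\}$ and Taylor-expanding $\Lambda$ around $s$ to third order, one recognizes, after the standard rescaling $\theta\mapsto \theta/(\sigma_s\sqrt n)$, the characteristic function of the normal law corrected by the Edgeworth polynomial with coefficient $\Lambda'''(s)/(6\sigma_s^3\sqrt n)$; the analytic dependence of $\nu_z,r_z$ on $z$ produces, upon differentiating in $\theta$ at $0$, exactly the bias correction $-b_{s,\varphi}(x)/(\sigma_s\sqrt n)$ through the identity $b_{s,\varphi}(x)=\partial_z\big|_{z=s}\big[\nu_z(\varphi r_s) r_z(x)/(r_s(x))\big]$ up to the $\pi_s(\varphi)$-proportional part (this identification of $b_{s,\varphi}$ with the first-order term is already implicit in \cite{XGL19b}, and I would quote Lemma \ref{Lem-Bs} for its regularity).

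Next I would carry out the Esseen smoothing inequality: the distance between the signed measure $A\mapsto \bb E_{\bb Q_s^x}[\varphi(G_n\!\cdot\!x)\mathds 1_A(\sigma(G_n,x))]$ and its claimed Edgeworth approximation is controlled, uniformly in $t$, by $\int_{|\theta|\le T}|\widehat{\Delta}_n(\theta)|/|\theta|\,d\theta + c/T$, where $\widehat\Delta_n$ is the difference of the two Fourier transforms and $T=T_n\asymp \sqrt n$. The integral over small $|\theta|$ is handled by the Taylor expansion above (giving the $o(1/\sqrt n)$ term, proportional to $\pi_s(\varphi)$ or, via the $r_z,\nu_z$ derivatives, controlled by $\|\varphi\|_\gamma$); the integral over $|\theta|$ bounded away from $0$ but inside $B_{s_0}(0)$ uses the spectral gap, i.e. that the spectral radius of $P_{s+i\theta}$ is $<1$ there, so $P_{s+i\theta}^n(\varphi r_s)$ decays geometrically with a bound proportional to $\|\varphi r_s\|_\gamma\le c\|\varphi\|_\gamma$. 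The $c/T$ term from Esseen's inequality is $O(1/\sqrt n)$, which is too crude; to reach the stated $\|\varphi\|_\gamma\, O(1/n)$ remainder one must instead use the second-order smoothing/Esseen bound (as in Petrov \cite{Pet75} or Feller), exploiting that the Edgeworth approximant already matches the target up to order $1/\sqrt n$ so that the next correction is $O(1/n)$; alternatively one integrates up to $T_n\asymp n$ using the analyticity of $z\mapsto P_z$ on all of $B_{s_0}(0)$, which is available here precisely because the exponential moment condition \ref{Ch7Condi-Moment} gives a genuine complex-analytic family rather than just a $C^k$ one. This is the point where the exponential moment hypothesis is essential and where the improvement over \cite[Theorem 5.3]{XGL19b} comes from.

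The only genuinely non-routine step is obtaining \emph{uniformity in $\varphi\in\mathscr B_\gamma$} with the remainder scaling linearly in $\|\varphi\|_\gamma$: every estimate above must be phrased as a bound on an operator applied to $\varphi r_s$, never on a scalar depending on $\varphi$ in an uncontrolled way, and the splitting of the leading term into a $\pi_s(\varphi)$-proportional part (carrying the $\Phi,\phi$ terms and the $o(1/\sqrt n)$) versus a $\|\varphi\|_\gamma O(1/n)$ part must be done by isolating $\nu_s(\varphi r_s)/\nu_s(r_s)=\pi_s(\varphi)$ and estimating the differences $\nu_{s+i\theta}(\varphi r_s)-\nu_s(\varphi r_s)=O(|\theta|)\,\|\varphi\|_\gamma$, $r_{s+i\theta}(x)/r_s(x)-1=O(|\theta|)$, uniformly in $x,s$. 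This bookkeeping, together with the geometric decay of $\|L_{s+i\theta}^n\|_{\mathscr B_\gamma\to\mathscr B_\gamma}$ uniformly for $s+i\theta\in B_{s_0}(0)$, delivers the claimed expansion; I expect the main obstacle to be the second-order Esseen estimate needed to push the $\varphi$-dependent remainder from $O(1/\sqrt n)$ down to $O(1/n)$, since the naive smoothing inequality is insufficient and one must argue more carefully near $\theta=0$ using the full Taylor expansion of both $\Lambda$ and the eigenprojection.
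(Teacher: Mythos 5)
Your outline reconstructs, essentially correctly, the Fourier--analytic machinery (characteristic function via $P_{s+i\theta}^n$, spectral decomposition, Esseen smoothing) that underlies this result, and it correctly flags the key bookkeeping issue of tracking how remainders scale in $\varphi$. But the paper does not re-derive any of this: it defines $F(t)$ (the quantity of interest plus the $b_{s,\varphi}(x)\phi(t)/(\sigma_s\sqrt n)$ correction) and $H(t)$ (the Edgeworth approximant with coefficient $\mathbb{E}_{\mathbb{Q}_s^x}[\varphi(G_n\cdot x)]$ rather than $\pi_s(\varphi)$), then applies Proposition 4.1 of \cite{XGL19b} verbatim to get $\sup_t|F(t)-H(t)|\le\frac1\pi(I_1+I_2+I_3+I_4)$, and quotes from \cite{XGL19b} that $I_2\le Ce^{-cn}\|\varphi\|_\gamma$, $I_3,I_4\le c\|\varphi\|_\gamma/n$. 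The \emph{only} new step is noting $I_1=o(1/\sqrt n)\sup_t|H'(t)|=o(1/\sqrt n)\,\mathbb{E}_{\mathbb{Q}_s^x}[\varphi(G_n\cdot x)]$ and then $\mathbb{E}_{\mathbb{Q}_s^x}[\varphi(G_n\cdot x)]\le\pi_s(\varphi)+Ce^{-cn}\|\varphi\|_\gamma$. So where you propose a full re-derivation, the paper's proof is a three-line bookkeeping refinement of \cite[Theorem 5.3]{XGL19b}.

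There is one genuine misconception in your plan. You worry that the $c/T$ term from Esseen is $O(1/\sqrt n)$ and ``too crude,'' and propose to fix this with a second-order smoothing bound or by integrating up to $T_n\asymp n$. Neither is needed, and the second is not even available: analyticity of $z\mapsto P_z$ holds only on $B_{s_0}(0)$, so after rescaling by $\sigma_s\sqrt n$ the Fourier variable is confined to $|\theta|\lesssim\sqrt n$, and the $c/T$ term cannot be pushed below order $1/\sqrt n$ this way. The actual resolution, which the paper uses, is that the $c/T$ contribution in the smoothing inequality is proportional to $\sup_t|H'(t)|$, which is controlled by $\mathbb{E}_{\mathbb{Q}_s^x}[\varphi(G_n\cdot x)]\approx\pi_s(\varphi)$, not by $\|\varphi\|_\gamma$. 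Taking $T_n/\sqrt n\to\infty$ slowly then makes this piece $\pi_s(\varphi)\,o(1/\sqrt n)$, which is exactly what the theorem allows; only the other three pieces ($I_2,I_3,I_4$), which do carry $\|\varphi\|_\gamma$, need to be $O(1/n)$, and those bounds are already in \cite{XGL19b}. In short, the theorem's remainder is a \emph{sum} of two terms with different $\varphi$-dependence, and the dominant $o(1/\sqrt n)$ piece is attached to $\pi_s(\varphi)$; once you see this, no upgrade of the smoothing lemma is required.
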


\begin{proof}
For any $x \in \bb P(V)$, define
\begin{align*}
F(t) & = \mathbb{E}_{\mathbb{Q}_s^x}
\Big[  \varphi(G_n \!\cdot\! x) \mathds{1}_{ \big\{ \frac{\sigma(G_n, x) - n \Lambda'(s) }{\sigma_s \sqrt{n}} \leq t \big\} } \Big]
  +  \frac{ b_{s,\varphi}(x) }{ \sigma_s \sqrt{n} } \phi(t),  
  \quad t \in \mathbb{R},   \notag\\
H(t) & =   \mathbb{E}_{\mathbb{Q}_s^x} [ \varphi(G_n \!\cdot\! x) ]
 \Big[ \Phi(t) + \frac{\Lambda'''(s)}{ 6 \sigma_s^3 \sqrt{n}} (1-t^2) \phi(t) \Big],  \quad  t\in \mathbb{R}.
\end{align*}
Since $F(-\infty) = H(-\infty) = 0$ and $F(\infty) = H(\infty)$, 
applying Proposition 4.1 of \cite{XGL19b} we get that 
\begin{align}\label{BerryEsseen001}
  \sup_{t \in \mathbb{R}}  \big| F(t) - H(t)  \big|
\leq  \frac{1}{\pi }  ( I_1 + I_2 + I_3 + I_4),
\end{align}
where
\begin{align*}
I_1  & =   o \Big( \frac{ 1 }{\sqrt{n}} \Big) \sup_{t \in \mathbb{R}} |H'(t)|,  
\quad    I_2   \leq C e^{-cn} \|\varphi \|_{\gamma},  
\quad  I_3    \leq  \frac{c}{n} \|\varphi \|_{\gamma},   \quad 
I_4   \leq  \frac{c}{n} \|\varphi \|_{\gamma}. 
\end{align*}
Here the bounds for $I_2$, $I_3$ and $I_4$ are obtained in \cite{XGL19b}. 
It is easy to see that 
\begin{align*}%\label{}
I_1 =   o \Big( \frac{ 1 }{\sqrt{n}} \Big)  \mathbb{E}_{\mathbb{Q}_s^x} \Big[  \varphi(G_n \!\cdot\! x)  \Big]. 
\end{align*}
This, together with the fact that 
\begin{align*}%\label{}
\mathbb{E}_{\mathbb{Q}_s^x} \Big[  \varphi(G_n \!\cdot\! x) \Big] \leq  \pi_s(\varphi) +  C e^{-cn} \|\varphi \|_{\gamma}
\end{align*}
(cf.\  \cite{XGL19b}), 
proves the theorem. 
\end{proof}

In the following we shall construct a partition $(\chi_{n,k}^y)_{k \geq 0}$ of the unity on the projective space $\bb P(V)$,
which is similar to the partitions in \cite{XGL19d, GQX20, DKW21}.  
In contrast to \cite{XGL19d, GQX20}, there is no escape of mass in our partition, 
which simplifies the proofs. 
Our partition becomes finer when $n \to \infty$, which allows us to obtain precise expressions for remainder terms
in the central limit theorem 
and thereby to establish the Edgeworth expansion for the coefficients. 

Let $U$ be the uniform distribution function on the interval $[0,1]$: $U(t)=t$ 
for $t\in [0,1]$, $U(t)=0$ for $t <0$ 
and $U(t)=1$ for $t > 1$. 
 Let $a_n=\frac{1}{\log n}$.
 Here and below we assume that $n \geq 18$ so that $a_n e^{a_n} \leq \frac{1}{2}$.
For any integer $k\geq 0$, define 
\begin{align*} %\label{}
U_{n,k}(t)= U\left(\frac{t-(k-1) a_n}{a_n}\right),  \qquad 
h_{n,k}(t)=U_{n,k}(t) - U_{n,k+1}(t),  \quad  t \in \bb R. 
\end{align*}
It is easy to see that $U_{n,m} = \sum_{k=m}^\infty h_{n,k}$ for any $m\geq 0$. 
Therefore, for any $t\geq 0$ and $m\geq0$, we have
\begin{align} \label{unity decomposition h-001}
\sum_{k=0}^{\infty} h_{n,k} (t) =1, \quad \sum_{k=0}^{m} h_{n,k} (t) + U_{n,m+1} (t) =1.
\end{align}
Note that for any $k\geq 0$, 
\begin{align} \label{h_kLip001}
\sup_{s,t\geq 0: s \neq t} \frac{ | h_{n,k}(s) - h_{n,k}(t) |}{|s-t|} \leq \frac{1}{a_{n}}. 
\end{align}
For any $x \in \bb P(V)$ and $y \in \bb P(V^*)$, set 
\begin{align}\label{Def-chi-nk}
\chi_{n,k}^y(x)=h_{n,k}(-\log \delta(x, y))  \quad  \mbox{and}  \quad 
\overline \chi_{n,k}^y(x)= U_{n,k} ( -\log \delta(x, y) ), 
\end{align}
where we recall that $-\log\delta(x, y) \geq 0$ for any $x\in \bb P (V)$ and $y \in \bb P(V^*)$. 
From \eqref{unity decomposition h-001}
 we have the following partition of the unity  on $\bb P(V)$: for any $x\in \bb P (V)$, $y \in \bb P(V^*)$ and $m\geq 0$,
\begin{align} \label{Unit-partition001}
\sum_{k=0}^{\infty} \chi_{n,k}^y (x) =1, \quad 
\sum_{k=0}^{m} \chi_{n,k}^y (x) + \overline \chi_{n,m+1}^y (x) =1.
\end{align}
Denote by $\supp (\chi_{n,k}^y)$ the support of the function $\chi_{n,k}^y$.  
It is easy to see that for any $k\geq 0$ and $y\in \bb P(V^*)$,
\begin{align} \label{on the support on chi_k-001}
 -\log \delta(x, y) \in [a_n (k-1), a_n(k+1)] \quad \mbox{for any}\ x\in \supp (\chi_{n,k}^y). 
\end{align}

\begin{lemma} \label{lemmaHolder property001}
There exists a constant $c>0$ such that 
for any $\gamma\in(0,1]$, %there exists a constant $c_{\gamma} > 0$ such that
 $k\geq 0$ and $y\in \bb P(V^*)$, it holds
  $\chi_{n,k}^y\in \scr B_{\gamma}$ and, moreover, 
\begin{align} \label{Holder prop ohCHI_k-001}
\| \chi_{n,k}^y \|_{\gamma} \leq \frac{c e^{\gamma k a_n}}{a_{n}^\gamma}.
\end{align}
\end{lemma}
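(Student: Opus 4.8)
The plan is to bound the sup-norm and the $\gamma$-Hölder seminorm of $\chi_{n,k}^y = h_{n,k}(-\log\delta(\cdot,y))$ separately, using the Lipschitz bound \eqref{h_kLip001} on $h_{n,k}$, the localization \eqref{on the support on chi_k-001} of the support, and a Hölder-type control of the map $x \mapsto -\log\delta(x,y)$. First I would note that $\|\chi_{n,k}^y\|_\infty \le 1$ trivially since $U$, hence each $h_{n,k}$, takes values in $[0,1]$; this already contributes a term of the claimed order since $e^{\gamma k a_n}/a_n^\gamma \ge 1$. The substance is the seminorm $[\chi_{n,k}^y]_\gamma$. For $x,x' \in \bb P(V)$, if neither point lies in $\supp(\chi_{n,k}^y)$ the difference vanishes, so we may assume (say) $x \in \supp(\chi_{n,k}^y)$, whence by \eqref{on the support on chi_k-001} $-\log\delta(x,y) \le a_n(k+1)$, i.e. $\delta(x,y) \ge e^{-a_n(k+1)}$.

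Next I would use the chain $|\chi_{n,k}^y(x) - \chi_{n,k}^y(x')| \le \frac{1}{a_n}\,\big|\log\delta(x,y) - \log\delta(x',y)\big|$ from \eqref{h_kLip001}, and then split into two cases according to whether $d(x,x')$ is small or not. When $d(x,x')$ is bounded away from $0$ one bounds the left side by $1 \le d(x,x')^\gamma / d(x,x')^\gamma$ and absorbs the constant. When $d(x,x')$ is small, the key estimate is a local Lipschitz bound for $x\mapsto \log\delta(x,y)$ on the region where $\delta(\cdot,y)$ is not too small: writing $\delta(x,y) = |\langle f,v\rangle|/(\|f\|\|v\|)$ with $\|v\|=\|v'\|=1$, $\|f\|=1$, one has $|\delta(x,y)-\delta(x',y)| \le \|v - v'\| \le \sqrt{2}\, d(x,x')$ (as in the computation already carried out in the proof of Lemma \ref{Lem-ds}), and hence
\begin{align*}
\big|\log\delta(x,y) - \log\delta(x',y)\big| \le \frac{|\delta(x,y)-\delta(x',y)|}{\min\{\delta(x,y),\delta(x',y)\}} \le \frac{\sqrt 2\, d(x,x')}{\min\{\delta(x,y),\delta(x',y)\}}.
\end{align*}
On the support, the denominator is at least $e^{-a_n(k+1)}$ up to adjusting $x,x'$ — more carefully, one restricts attention to $d(x,x')$ small enough that $\delta(x',y) \ge \tfrac12 e^{-a_n(k+1)}$ as well, the complementary range of $d(x,x')$ being handled by the "bounded away from zero" case. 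This yields $|\chi_{n,k}^y(x)-\chi_{n,k}^y(x')| \le \frac{c}{a_n} e^{a_n k}\, d(x,x')$, and since $d(x,x') \le \sqrt 2$ is bounded, $d(x,x') \le c\, d(x,x')^\gamma$, giving the bound $\frac{c\, e^{a_n k}}{a_n}\, d(x,x')^\gamma$.

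Finally I would reconcile the exponent: the crude bound above has $e^{a_n k}/a_n$ rather than $e^{\gamma k a_n}/a_n^\gamma$; to get the sharper form one interpolates, using $|\chi_{n,k}^y(x)-\chi_{n,k}^y(x')| \le 2\|\chi_{n,k}^y\|_\infty \le 2$ together with $|\chi_{n,k}^y(x)-\chi_{n,k}^y(x')| \le \frac{c\, e^{a_n k}}{a_n} d(x,x')$, and takes the geometric mean with weights $1-\gamma$ and $\gamma$: this produces $|\chi_{n,k}^y(x)-\chi_{n,k}^y(x')| \le c\, \big(\tfrac{e^{a_n k}}{a_n}\big)^\gamma d(x,x')^\gamma$, which is exactly \eqref{Holder prop ohCHI_k-001}. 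Adding the $\|\cdot\|_\infty$ bound completes the estimate. The main obstacle I anticipate is organizing the case distinction on $d(x,x')$ cleanly so that the lower bound $\delta(x',y) \gtrsim e^{-a_n k}$ holds whenever the local Lipschitz estimate is invoked, and checking that the threshold separating the two cases does not depend on $n$ or $k$ in a way that spoils the stated bound — one must use that $a_n \le 1$ and that $\delta$ is bounded above by $1$ so that $e^{-a_n(k+1)}$ controls matters uniformly. Everything else is the routine interpolation recorded above.
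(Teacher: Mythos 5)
Your proposal is correct, and it arrives at the same estimate as the paper while organizing the case analysis somewhat differently. Both arguments rest on the same quantitative ingredients: the $1/a_n$-Lipschitz property of $h_{n,k}$ from \eqref{h_kLip001}; the lower bound $\delta(\cdot,y)\geq e^{-(k+1)a_n}$ on the support from \eqref{on the support on chi_k-001}, combined with the elementary estimate $|\delta(x,y)-\delta(x',y)|\leq\sqrt2\,d(x,x')$; and an interpolation step that turns a Lipschitz estimate into a $\gamma$-Hölder one by writing a bounded difference as its $(1-\gamma)$-th power times its $\gamma$-th power. The difference is how the cases are set up. The paper splits according to which dyadic intervals $B_j=((j-1)a_n,ja_n]$ the numbers $t'=-\log\delta(x',y)$ and $t''=-\log\delta(x'',y)$ fall into, handling the one-interval case directly and reducing the multi-interval cases to it by inserting an intermediate point $x^*$ on the geodesic from $x'$ to $x''$ and applying the triangle inequality. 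You instead split on the size of $d(x,x')$ relative to a threshold of order $e^{-(k+1)a_n}$: below the threshold you can ensure $\delta(x',y)\gtrsim e^{-(k+1)a_n}$ as well, so the mean-value estimate $|\log a-\log b|\leq|a-b|/\min\{a,b\}$ is available (where the paper uses the equivalent $|\log(1+u)|\leq2|u|$ device); above the threshold the trivial bound $|\chi_{n,k}^y(x)-\chi_{n,k}^y(x')|\leq1$ is already majorized by $c\,(d(x,x')/\mathrm{threshold})^\gamma$. Your route avoids the geodesic-intermediate-point construction entirely, at the cost of having to check that the $n,k$-dependence of the threshold is absorbed into the $e^{\gamma k a_n}/a_n^\gamma$ factor — you flag this explicitly, and it does work out since $a_n\leq1$ and $\gamma a_n\leq1$. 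Both proofs are of comparable length and yield the same constant up to an absolute factor.
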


\begin{proof} 
Since $\|\chi_{n,k}^y\|_{\infty} \leq 1$,
it is enough to give a bound for the modulus of continuity: 
\begin{align*} %\label{}
[\chi_{n,k}^y]_{\gamma} = \sup_{x',x''\in \bb P(V): x' \neq x''}\frac{|\chi_{n,k}^y(x') - \chi_{n,k}^y(x'')|}{d(x',x'')^{\gamma}},
\end{align*}
where $d$ is the angular distance on $\bb P(V)$ defined by \eqref{Angular-distance}. 
Assume that $x'=\bb R v'\in \bb P(V)$ and $x''=\bb R v''\in \bb P(V)$ are such that $\|v'\|=\|v''\|=1$. 
We note that 
\begin{align} \label{angular dist-bound001}
\| v'-v''\| \leq \sqrt{2}d(x',x''). 
\end{align}
For short, denote $B_k=((k-1)a_n,ka_n]$. Note that the function $h_{n,k}$ is increasing on $B_k$ and 
decreasing on $B_{k+1}$. 
Set for brevity
$t'=-\log \delta(x', y)$ and $t''=-\log \delta(x'', y)$. 
First we consider the case when $t'$ and $t''$ are such that $t',t''\in B_{k}$. %or $t',t''\in B_k$.
Using \eqref{Def-chi-nk}, \eqref{h_kLip001}
and the fact that $|h_{n,k}| \leq 1$,  we have that for any $\gamma \in (0,1]$, 
\begin{align} \label{bounddCHI-001}
&|\chi_{n,k}^y(x')- \chi_{n,k}^y(x'')| = |h_{n,k}(t')- h_{n,k}(t'')|^{1-\gamma}  |h_{n,k}(t')- h_{n,k}(t'')|^{\gamma} \notag\\ 
&\leq 2 |h_{n,k}(t')- h_{n,k}(t'')|^{\gamma} \leq 2 \frac{|t'-t''|^{\gamma}}{a_{n}^{\gamma}} 
= \frac{2}{a_{n}^{\gamma}} |\log u'-\log u''|^{\gamma},
\end{align}
where we set for brevity $u'=\delta(x', y)$ and $u''= \delta(x'', y)$. % Then $t'=\log u'$ and $t''=\log u''$.
Since $u'=e^{-t'}$, $u''=e^{-t''}$ and $t,t'\in B_{k}$, we have
$u''\geq e^{-k a_n}$ and $| u' -u''| \leq e^{-(k-1) a_n }-e^{-ka_n}$. 
Therefore, for $n \geq 18$, 
\begin{align*} %\label{}
\Big| \frac{u'}{u''}-1 \Big|= \Big| \frac{u'-u''}{u''} \Big| 
\leq \frac{ e^{-(k-1)a_n} -e^{-ka_n} }{e^{-ka_n}}
=e^{a_n}-1\leq a_n e^{a_n} 
\leq \frac{1}{2}, 
\end{align*}
which, together with the inequality $|\log(1+a)| \leq  2 |a|$ for any $|a| \leq \frac{1}{2}$, implies
\begin{align} \label{h_kLip002}
|\log u' - \log u''|= \left| \log\left(1+ \frac{u' - u''}{u''} \right) \right| \leq 2 \frac{|u'-u''|}{u''}.
\end{align}
Since $u''\geq e^{-k a_n}$, using the fact that $\|v'\|=\|v''\|=1$ and \eqref{angular dist-bound001}, we get
\begin{align} \label{Lipschitz-u}
\frac{|u'-u''|}{u''}
&\leq  e^{k a_n}  |\delta(x', y)- \delta(x'', y)| = e^{k a_n} \frac{|f(v') - f(v'') |}{\|f\|} \notag\\
& \leq  e^{k a_n} \| v'-v'' \| \leq \sqrt{2} e^{ ka_n} d(x',x'').
\end{align}
Therefore, from \eqref{bounddCHI-001}, \eqref{h_kLip002} and \eqref{Lipschitz-u}, it follows that for $\gamma \in (0,1]$, 
\begin{align} \label{bounddCHI-010}
|\chi_{n,k}^y(x')- \chi_{n,k}^y(x'')|  \leq  6 \frac{e^{\gamma k a_n}}{a_{n}^\gamma}d(x',x'')^\gamma.
\end{align}
The case $t',t''\in B_k$ is treated in the same way.

To conclude the proof we shall consider the case when
$t'=-\log\delta(x', y)\in B_{k-1}$ and $t''=-\log\delta(x'', y)\in B_{k}$; 
the other cases can be handled in the same way.
We shall reduce this case to the previous ones.
Let $x^*\in \bb P(V)$ be the point on the geodesic line $[x',x'']$ on $\bb P(V)$ 
such that $d(x',x'')=d(x',x^*)+d(x^*,x'')$ and $t^*=-\log\delta(y,x^*)=ka_n.$
Then 
\begin{align} \label{bounddCHI-011}
|\chi_{n,k}^y(x')- \chi_{n,k}^y(x'')| 
&\leq |\chi_{n,k}^y(x')- \chi_{n,k}^y(x^*)| + |\chi_{n,k}^y(x'')- \chi_{n,k}^y(x^*)| \notag\\
&\leq 
6 \frac{e^{\gamma k a_n}}{ a_{n}^\gamma }d(x',x^*)^{\gamma} 
+ 6 \frac{e^{\gamma k a_n}}{ a_{n}^\gamma }d(x'',x^*)^{\gamma} \notag\\
&\leq 
12 \frac{e^{\gamma k a_n}}{ a_{n}^\gamma }d(x',x'')^{\gamma}. 
\end{align}
From \eqref{bounddCHI-010} and \eqref{bounddCHI-011} we conclude that 
$[\chi_{n,k}^y]_{\gamma}\leq  12 \frac{e^{ \gamma k a_n }}{a_{n}^{\gamma}}$, which
shows \eqref{Holder prop ohCHI_k-001}.
\end{proof}

We need the following bounds.
Let $M_n=\floor{A\log^2 n}$, where $A>0$ is a constant and $n$ is large enough. 
For any measurable function $\varphi$ on $\bb P(V)$, it is convenient to denote 
\begin{align} \label{varphi-nk-001}
\varphi_{n,k}^y=\varphi \chi_{n,k}^y\quad\mbox{for}\quad  0 \leq k \leq M_n-1,\quad 
\varphi_{n,M_n}^y=\varphi \overline \chi_{n,M_n}^y.
\end{align}

\begin{lemma} \label{new bound for delta020} 
Assume \ref{Ch7Condi-Moment} and \ref{Ch7Condi-IP}. 
Then there exist constants $s_0 >0$ and $c >0$ such that for any $s \in (-s_0, s_0)$, 
$y\in \bb P(V^*)$ and any non-negative bounded measurable function $\varphi$ on $\bb P(V)$, 
\begin{align*} %\label{}
 \sum_{k=0}^{M_n} (k+1) a_n \pi_s(\varphi_{n,k}^y) \leq  -d_{s,\varphi}(y) + 2 a_n \pi_s(\varphi)
\end{align*}
and
\begin{align*} %\label{}
\sum_{k=0}^{M_n} (k-1) a_n \pi_s(\varphi_{n,k}^y) \geq -d_{s,\varphi}(y) - 2 a_n \pi_s(\varphi) -  c \frac{ \|\varphi \|_{\infty}}{n^2}.
\end{align*}
\end{lemma}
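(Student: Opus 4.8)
The plan is to compare the weighted sums $\sum_k (k\pm1)a_n\,\pi_s(\varphi_{n,k}^y)$ with the integral $-d_{s,\varphi}(y)=-\int_{\bb P(V)}\varphi(x)\log\delta(x,y)\,\pi_s(dx)$ by exploiting the fact, recorded in \eqref{on the support on chi_k-001}, that on $\supp(\chi_{n,k}^y)$ one has $-\log\delta(x,y)\in[(k-1)a_n,(k+1)a_n]$, so that $(k-1)a_n\le -\log\delta(x,y)\le(k+1)a_n$ pointwise on that support. First I would split the target function using the partition of unity \eqref{Unit-partition001}: for $0\le k\le M_n-1$ we have $\varphi_{n,k}^y=\varphi\chi_{n,k}^y$, while the last block $\varphi_{n,M_n}^y=\varphi\overline\chi_{n,M_n}^y$ carries the tail $\sum_{k\ge M_n}\chi_{n,k}^y$. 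Summing the pointwise inequalities against $\pi_s$ over the "interior" blocks $k\le M_n-1$ gives, up to an error of size $a_n\pi_s(\varphi)$ from the overlap of consecutive supports,
\[
\sum_{k=0}^{M_n-1}(k-1)a_n\,\pi_s(\varphi_{n,k}^y)\;\le\;-\int_{\bigcup_{k<M_n}\supp(\chi_{n,k}^y)}\varphi(x)\log\delta(x,y)\,\pi_s(dx)\;+\;a_n\pi_s(\varphi),
\]
and similarly with $(k+1)a_n$ on the left and the reverse inequality; the extra $a_n\pi_s(\varphi)$ accounts for the fact that $\sum_k\chi_{n,k}^y$ double-counts the width-$a_n$ transition regions.

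Next I would handle the tail block. On $\supp(\overline\chi_{n,M_n}^y)$ we have $-\log\delta(x,y)\ge (M_n-1)a_n\asymp A\log n$, so this is the event $\{\log\delta(G_n\!\cdot\! x,y)\lesssim -\log n\}$ in the language of Lemma \ref{Lem_Regu_pi_s}. More directly, by the H\"older regularity of $\pi_s$ in Lemma \ref{Lem_Regu_pi_s00} and the Markov (or Chebyshev) inequality with the moment $\eta$, $\pi_s\big(-\log\delta(\cdot,y)\ge (M_n-1)a_n\big)\le C\,e^{-\eta(M_n-1)a_n}\le C\,n^{-\eta A}$, and one chooses $A>0$ large enough that $\eta A\ge 3$, say. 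Consequently $a_n M_n\,\pi_s(\varphi_{n,M_n}^y)=O(\|\varphi\|_\infty\log n\cdot n^{-\eta A})=O(\|\varphi\|_\infty/n^2)$, which absorbs into the $c\|\varphi\|_\infty/n^2$ error term in the lower bound; for the upper bound the tail contribution is non-negative and simply dropped, since $(M_n+1)a_n\pi_s(\varphi_{n,M_n}^y)\ge 0$. The complementary part of the integral, namely $-\int_{\{-\log\delta\ge (M_n-1)a_n\}}\varphi\log\delta\,d\pi_s$, is likewise $O(\|\varphi\|_\infty/n^2)$ by the same moment bound applied to $-\log\delta(x,y)\le \delta(x,y)^{-\eta}/\eta$ as in \eqref{Inequlity-log-eta}–Lemma \ref{Lem-ds}, so replacing the integral over the truncated set by the full integral $-d_{s,\varphi}(y)$ costs only $O(\|\varphi\|_\infty/n^2)$.

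Putting the two estimates together yields
\[
\sum_{k=0}^{M_n}(k+1)a_n\,\pi_s(\varphi_{n,k}^y)\le -d_{s,\varphi}(y)+2a_n\pi_s(\varphi),\qquad
\sum_{k=0}^{M_n}(k-1)a_n\,\pi_s(\varphi_{n,k}^y)\ge -d_{s,\varphi}(y)-2a_n\pi_s(\varphi)-c\frac{\|\varphi\|_\infty}{n^2},
\]
where the constant $2$ (rather than $1$) in front of $a_n\pi_s(\varphi)$ comfortably covers the overlap error together with the contribution of the block $k=0$, on whose support $-\log\delta$ may be as small as $-a_n$ (here one uses that the true quantity $-\log\delta\ge 0$, so the negative lower endpoint $(k-1)a_n=-a_n$ only helps the lower bound and is controlled in the upper bound by an extra $a_n\pi_s(\varphi)$). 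The uniformity in $s\in(-s_0,s_0)$ and $y\in\bb P(V^*)$ is inherited directly from the uniform bound in Lemma \ref{Lem_Regu_pi_s00}. The main obstacle is the bookkeeping of the transition-region overlaps and of the first block $k=0$: one must verify that all of these are genuinely of order $a_n\pi_s(\varphi)$ with an absolute constant no larger than $2$, rather than growing with $M_n$; this is where the choice $a_n=1/\log n$ and the finiteness of the number of overlapping supports at any point (each $x$ lies in $\supp(\chi_{n,k}^y)$ for at most two consecutive $k$) are used decisively.
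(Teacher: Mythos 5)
Your building blocks are right (pointwise bracketing $(k-1)a_n\le -\log\delta(x,y)\le (k+1)a_n$ on $\supp(\chi_{n,k}^y)$, and the Markov/H\"older-regularity estimate for the tail block), but the actual derivation of the two stated inequalities is muddled, and the first one does not follow from your argument as written.

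First, there is no ``overlap'' error. The functions $\chi_{n,k}^y$ form an exact partition of unity, $\sum_{k\ge 0}\chi_{n,k}^y\equiv 1$ (equation \eqref{Unit-partition001}); nothing is double-counted, so the identity $\sum_{k=0}^{M_n}\pi_s(\varphi_{n,k}^y)=\pi_s(\varphi)$ is exact and $\sum_{k=0}^{M_n}\int\varphi_{n,k}^y\log\delta\,d\pi_s=d_{s,\varphi}(y)$ exactly. The $2a_n\pi_s(\varphi)$ in the statement is not a ``comfortable'' slack for overlaps or for the $k=0$ block; it is the exact quantity produced by the algebraic shift $(k+1)a_n=(k-1)a_n+2a_n$ summed against $\sum_k\pi_s(\varphi_{n,k}^y)=\pi_s(\varphi)$. (Your remark about $-\log\delta$ being ``as small as $-a_n$'' on the $k=0$ block is moot: $-\log\delta\ge 0$ always, so that bound is trivially satisfied.)

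Second, and more seriously, the direction of the first inequality does not come out of your bracketing. The pointwise inequality $-\log\delta\le (k+1)a_n$ on $\supp(\chi_{n,k}^y)$ gives $\sum_{k}(k+1)a_n\pi_s(\varphi_{n,k}^y)\ge -d_{s,\varphi}(y)$ — a $\ge$, not the claimed $\le$. The proof has to go the other way around: use $-\log\delta\ge (k-1)a_n$ (valid for every $k\le M_n$ including the tail block, so no truncation estimate is needed here) to get $\sum_{k=0}^{M_n}(k-1)a_n\pi_s(\varphi_{n,k}^y)\le -d_{s,\varphi}(y)$, and \emph{then} rewrite the left side as $\sum_{k}(k+1)a_n\pi_s(\varphi_{n,k}^y)-2a_n\pi_s(\varphi)$. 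This shift is the missing step, and without it the first inequality is not obtained. Relatedly, ``the tail contribution is non-negative and simply dropped'' for the upper bound is backwards: you cannot drop a non-negative term from the left of a $\le$; in fact for the first inequality nothing is dropped at all. The tail estimate (which you set up correctly, up to choosing $\eta A$ large) is needed only for the second inequality, where the paper bounds $-\int\varphi_{n,M_n}^y\log\delta\,d\pi_s\le c\|\varphi\|_\infty/n^2$, applies $-\log\delta\le (k+1)a_n$ on the blocks $k\le M_n-1$, performs the same $(k+1)\to(k-1)$ shift, and finally adds back the non-negative term $(M_n-1)a_n\pi_s(\varphi_{n,M_n}^y)\ge 0$ to restore the full sum over $k\le M_n$.
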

\begin{proof}
Recall that $d_{s,\varphi}(y)$ is defined in \eqref{drift-d001bis}.
Using \eqref{varphi-nk-001} and \eqref{Unit-partition001} we deduce that
\begin{align*} %\label{lower bound d-001}
-d_{s,\varphi}(y) 
& = - \sum_{k=0}^{M_n} \int_{\bb P (V)} \varphi_{n,k}^y(x) \log\delta(x, y) \pi_s(dx) \notag \\
&\geq \sum_{k=0}^{M_n} (k-1) a_n \pi_s(\varphi_{n,k}^y)   
 = \sum_{k=0}^{M_n} (k+1) a_n \pi_s(\varphi_{n,k}^y) - 2 a_n \pi_s(\varphi),
\end{align*}
which proves the first assertion of the lemma.

Using the Markov inequality and the H\"older regularity of the invariant measure $\pi_s$ (Lemma \ref{Lem_Regu_pi_s00}), 
 we get that there exists a small constant $\eta >0$ such that 
\begin{align*} %\label{}
&- \int_{\bb P(V)} \varphi_{n,M_n}^y(x)\log \delta(x, y) \pi_s(dx)  \notag\\
& \leq c \|\varphi \|_{\infty} \int_{\bb P(V)} \frac{e^{-\eta A\log n}}{\delta(x, y)^{\eta}} \delta(x, y)^{-\eta} \pi_s(dx) \notag \\
& = c \|\varphi \|_{\infty} e^{-\eta A \log n}  \int_{\bb P(V)}  \delta(x, y)^{-2\eta} \pi_s(dx) 
\leq  c \frac{ \|\varphi \|_{\infty}}{ n^2 },
\end{align*}
where in the last inequality we choose $A >0$ to be sufficiently large so that $\eta A \geq 2$. 
Therefore, 
\begin{align*} 
-d_{s,\varphi}(y) 
&= - \sum_{k=0}^{M_n} \int_{\bb P (V)} \varphi_{n,k}^y(x) \log\delta(x, y) \pi_s(dx) \notag \\
&\leq \sum_{k=0}^{M_n-1} (k+1) a_n \pi_s(\varphi_{n,k}^y) +  c \frac{ \|\varphi \|_{\infty}}{ n^2 } \notag  \\
& \leq \sum_{k=0}^{M_n-1} (k-1) a_n \pi_s(\varphi_{n,k}^y) + 2 a_n \pi_s(\varphi) +  c \frac{ \|\varphi \|_{\infty}}{ n^2 }  \notag\\
& \leq \sum_{k=0}^{M_n} (k-1) a_n \pi_s(\varphi_{n,k}^y) + 2 a_n \pi_s(\varphi) +  c \frac{ \|\varphi \|_{\infty}}{ n^2 }. 
\end{align*}
This proves the second assertion of the lemma.
\end{proof}

\begin{proof}[Proof of Theorem \ref{Thm-Edge-Expan-Coeff001extended}]
Without loss of generality, we assume that the target function $\varphi$ is non-negative. 
With the notation in \eqref{varphi-nk-001}, we have that for $t \in \bb R$, 
\begin{align}\label{Initial decompos-001aa}
I_n(t) &: =\bb{E}_{\mathbb{Q}_s^x} \left[ \varphi(G_n \!\cdot\! x) 
\mathds{1}_{ \big\{ \frac{\log |\langle f, G_n v \rangle| - n\Lambda'(s) }{ \sigma_s \sqrt{n} } \leq t  \big\}  } \right] \notag \\ 
& =  \sum_{k=0}^{M_n} 
\bb{E}_{\mathbb{Q}_s^x} \left[ \varphi_{n,k}^y (G_n \!\cdot\! x) 
\mathds{1}_{ \big\{ \frac{\log |\langle f, G_n v \rangle| - n\Lambda'(s) }{ \sigma_s \sqrt{n} } \leq t  \big\}  }\right] 
=: \sum_{k=0}^{M_n}  F_{n,k}(t).
\end{align}
For $0\leq k\leq M_n - 1$, using \eqref{Ch7_Intro_Decom0a} 
and the fact that $-\log \delta(x, y) \leq (k+1)a_n$ when $x \in \supp \varphi_{n,k}^y$, we get
\begin{align} \label{boundFfbar-001}
F_{n,k}(t)
\leq
\bb{E}_{\mathbb{Q}_s^x} \left[ \varphi_{n,k}^y (G_n \!\cdot\! x) 
\mathds{1}_{ \big\{ \frac{\sigma(G_n, x) - n\Lambda'(s) }{ \sigma_s \sqrt{n} } \leq t + \frac{(k+1) a_n}{\sigma_s \sqrt{n}} \big\}  }   \right] 
=:  H_{n,k}(t).
\end{align}
For $k=M_n$, we have
\begin{align} \label{boundFfbar-002}
F_{n,M_n}(t)
 &\leq 
 \bb{E}_{\mathbb{Q}_s^x} \left[ \varphi_{n,M_n}^y (G_n \!\cdot\! x) 
\mathds{1}_{ \big\{ \frac{ \sigma(G_n,x) - n\Lambda'(s) }{ \sigma_s \sqrt{n} } \leq t +\frac{(M_n+1) a_n}{\sigma_s \sqrt{n}}  \big\}  }\right] 
\notag \\
&\quad +\bb{E}_{\mathbb{Q}_s^x} \left[ \varphi_{n,M_n}^y (G_n \!\cdot\! x) 
\mathds{1}_{ \big\{ -\log \delta(G_n \cdot x, y)  \geq (M_n +1)a_n  \big\}  }\right] \notag \\
&=: H_{n,M_n}(t) +  W_{n}.  
\end{align}
By Lemma \ref{Lem_Regu_pi_s} and choosing $A >0$ large enough, we get
\begin{align} \label{W_n001}
W_{n} &\leq  \|\varphi \|_{\infty}  \mathbb{Q}_s^x ( -\log \delta(G_n \cdot x, y)  \geq A \log n )  \notag\\
&\leq   \frac{c_0}{n^{c_1 A}}   \|\varphi \|_{\infty}
\leq  \frac{c_0}{n^{2}}  \|\varphi \|_{\infty}.
\end{align}
Now we deal with $H_{n,k}(t)$ for $0 \leq k \leq M_n$.  
Denote for short $t_{n,k} = t +\frac{(k+1) a_n}{\sigma_s \sqrt{n}}$. 
Applying the Edgeworth expansion (Theorem \ref{Thm-Edge-Expan}) we obtain
that, uniformly in $s \in (-s_0, s_0)$, $x \in \bb P(V)$, 
$t \in \bb R$, $0 \leq k \leq M_n$ and 
$\varphi \in \mathscr{B}_{\gamma}$, as $n \to \infty$,
\begin{align*}%\label{}
  H_{n,k}(t)
 &   =   \pi_s(\varphi_{n,k}^y) \Big[  \Phi(t_{n,k}) + \frac{\Lambda'''(s)}{ 6 \sigma_s^3 \sqrt{n}} (1-t_{n,k}^2) \phi(t_{n,k}) \Big]
 \notag\\ 
 & \quad - \frac{ b_{s, \varphi_{n,k}^y}(x) }{ \sigma_s \sqrt{n} } \phi(t_{n,k})    
     +  \pi_s(\varphi_{n,k}^y)  o \Big( \frac{ 1 }{\sqrt{n}} \Big)  
  +  \lVert \varphi_{n,k}^y \rVert_{\gamma} O \Big( \frac{ 1 }{n} \Big).   
\end{align*}
Recall that $a_n=\frac{1}{\log n}$ and $M_n=\floor{A\log^2 n}$.  
By the Taylor expansion we have, 
uniformly in $s \in (-s_0, s_0)$, $x \in \bb P(V)$,  $t \in \bb R$ and $0 \leq k \leq M_n$, %and $\varphi \in \mathscr{B}_{\gamma}$, 
\begin{align*} %\label{001}
\Phi(t_{n,k}) 
& =\Phi(t) + \phi(t) \frac{(k+1) a_n}{\sigma_s \sqrt{n}} + O\Big(\frac{\log^2 n}{n} \Big)
\end{align*}
and 
\begin{align*} %\label{001}
(1-t_{n,k}^2) \phi(t_{n,k}) 
&= (1-t^2) \phi(t) + O\left( \frac{\log n }{\sqrt{n}} \right).
\end{align*}
Moreover, using Lemma \ref{Lem-Bs}, we see that
\begin{align*} %\label{001}
\frac{ b_{s, \varphi_{n,k}^y}(x) }{ \sigma_s \sqrt{n} } \phi(t_{n,k}) 
= \frac{ b_{s, \varphi_{n,k}^y}(x) }{ \sigma_s \sqrt{n} } \phi(t) 
+ \| \varphi_{n,k}^y \|_{\gamma} O \Big(\frac{\log n}{n}\Big). 
\end{align*}
Using these expansions and \eqref{boundFfbar-001}, \eqref{boundFfbar-002} and \eqref{W_n001}, we get
that there exists a sequence $(\beta_n)_{n \geq 1}$ of positive numbers
satisfying $\beta_n \to 0$ as $n \to \infty$, such that for any $0 \leq k \leq M_n$, 
\begin{align}\label{F_nkbound001}
   F_{n,k}(t)
 &   \leq   \pi_s(\varphi_{n,k}^y) \Big[  \Phi(t) + \frac{\Lambda'''(s)}{ 6 \sigma_s^3 \sqrt{n}} (1-t^2) \phi(t) \Big]\notag\\
  &\quad - \frac{ b_{s, \varphi_{n,k}^y}(x) }{ \sigma_s \sqrt{n} } \phi(t) 
  + \frac{\phi(t)}{\sigma_s \sqrt{n}} \pi_s(\varphi_{n,k}^y)(k+1) a_n   \notag\\
  & \quad  +  \pi_s(\varphi_{n,k}^y)  \frac{ \beta_n }{\sqrt{n}}  
  +   \lVert \varphi_{n,k}^y \rVert_{\gamma}  \frac{ c \log n }{n}.   
\end{align}
By Lemma \ref{lemmaHolder property001}, it holds that for any $\gamma \in (0, 1]$ and $0 \leq k \leq M_n$, 
\begin{align}\label{HolderNorm-varphik}
\lVert \varphi_{n,k}^y \rVert_{\gamma} 
\leq  c \lVert \varphi \rVert_{\infty} n^{\gamma A} \log^{\gamma} n  +  \lVert \varphi \rVert_{\gamma}. 
\end{align}
From \eqref{Def-bsvarphi}, it follows that $b_{s,\varphi}(x) = \sum_{k= 0}^{M_n} b_{s, \varphi_{n,k}^y}(x)$. 
Therefore, summing up over $k$ in \eqref{F_nkbound001},  using \eqref{HolderNorm-varphik} 
and taking $\gamma >0$ to be sufficiently small such that $\gamma A < \ee/2$, we obtain
\begin{align*}%\label{Edgeworth-Fn}
 I_n(t) = \sum_{k= 0}^{M_n}  F_{n,k}(t) & \leq    \pi_s(\varphi) 
   \Big[  \Phi(t) + \frac{\Lambda'''(s)}{ 6 \sigma_s^3 \sqrt{n}} (1-t^2) \phi(t) \Big]  \notag\\
& \quad -  \frac{ b_{s,\varphi}(x) }{ \sigma_s \sqrt{n} } \phi(t)
+ \frac{\phi(t)}{\sigma_s \sqrt{n}} \sum_{k = 0}^{M_n} \pi_s(\varphi_{n,k}^y)(k+1)  a_n \notag\\
&  \quad + \pi_s(\varphi)   \frac{ \beta_n }{\sqrt{n}}  +  
    \lVert \varphi \rVert_{\gamma}  \frac{ c }{n^{1 - \ee}}. 
\end{align*}
Using Lemma \ref{new bound for delta020} and the fact that $a_n \to 0$ as $n \to \infty$, 
we obtain the desired upper bound.

The lower bound is established in the same way. 
Instead of \eqref{boundFfbar-001} we use the following lower bound, which is obtained
using \eqref{Ch7_Intro_Decom0a} 
and the fact that $-\log \delta(x, y) \geq (k-1)a_n$ for $x \in \supp \varphi_{n,k}^y$ and $0\leq k\leq M_n$, 
\begin{align} \label{boundFfbar-003}
F_{n,k}(t)
\geq
\bb{E}_{\mathbb{Q}_s^x} \left[ \varphi_{n,k}^y (G_n \!\cdot\! x) 
\mathds{1}_{ \big\{ \frac{\sigma(G_n, x) - n\Lambda'(s) }{ \sigma_s \sqrt{n} } \leq t +\frac{(k-1) a_n}{\sigma_s\sqrt{n}} \big\}  }   \right]. 
\end{align}
Proceeding in the same way as in the proof of the upper bound, 
 using \eqref{boundFfbar-003}  
instead of \eqref{boundFfbar-001} and \eqref{boundFfbar-002},  we get
\begin{align*} %\label{Edgeworth-Fn-lower}
 I_n(t) = \sum_{k= 0}^{M_n}  F_{n,k}(t)  & \geq  \pi_s(\varphi) 
   \Big[  \Phi(t) + \frac{\Lambda'''(s)}{ 6 \sigma_s^3 \sqrt{n}} (1-t^2) \phi(t) \Big]  \notag\\
&\quad -    \frac{ b_{s, \varphi}(x) }{ \sigma_s \sqrt{n} } \phi(t)
+ \frac{\phi(t)}{\sigma_s \sqrt{n}} \sum_{k = 0}^{M_n}  \pi_s(\varphi_{n,k}^y)(k-1) a_n \notag\\
&  \quad + \pi_s(\varphi)  o \Big( \frac{ 1 }{\sqrt{n}} \Big)  +  
    \lVert \varphi \rVert_{\gamma} O \Big( \frac{1 }{n^{1 - \ee}} \Big). 
\end{align*}
The lower bound is obtained using again Lemma \ref{new bound for delta020}
and the fact that $a_n \to 0$ as $n \to \infty$. 
\end{proof}

%%%%%%%%%%%%%%%%%%%%%%%%%%%%%%%%%%%%%%%%%%%%%%%%%%%%%%%%%%%%%
%%%%%%%%%%%%%%%%%%%%%%%%%%%%%%%%%%%%%%%%%%%%%%%%%%%%%%%%%%%%%


\begin{thebibliography}{100}

%\bibitem{Aou21} Aoun, R.: The central limit theorem for eigenvalues. 
%\emph{Proceedings of the American Mathematical Society}, 149: 859-873, 2021.
%
\bibitem{AS19}  Aoun, R., Sert, C.: Law of large numbers for the spectral radius of random matrix products. 
\emph{American Journal of Mathematics}, 143(3): 995-1010, 2021.

\bibitem{Bel54} Bellman R.: Limit theorems for non-commutative operations. I. 
\emph{Duke Math. J.}, 21(3): 491-500, 1954.


%\bibitem{BQ13} Benoist Y., Quint J. F., Stationary measures and invariant subsets of homogeneous spaces (II). 
%  \emph{J. Amer. Math. Soc.}, 26(3): 659-734, 2013. 

\bibitem{BQ16} Benoist Y., Quint J. F., Central limit theorem for linear groups. 
 \emph{The Annals of Probability}, 44(2): 1308-1340, 2016.

\bibitem{BQ16b} Benoist Y., Quint J. F.: Random walks on reductive groups. \emph{Springer International Publishing}, 2016.

\bibitem{BL85} Bougerol P., Lacroix J.: Products of random matrices with applications to 
   Schr\"{o}dinger operators. \emph{Birkh\"{a}user Boston}, 1985.

\bibitem{BFLM11} Bourgain J., Furman A., Lindenstrauss E., Mozes S., 
Stationary measures and equidistribution for orbits of nonabelian semigroups on the torus. 
\emph{Journal of the American Mathematical Society}, 24(1): 231-280, 2011.


%\bibitem{BG12} Bourgain, J., Gamburd, A.: A Spectral Gap Theorem in $SU(d)$. 
%\emph{Journal of the European Mathematical Society}, 14(5): 1455–1511,  2012.


%\bibitem{CL90} Carmona R., Lacroix J.: Spectral theory of random Schr\"{o}dinger operators. 
%  \emph{Springer Science and Business Media}, 1990.
  
%\bibitem{Che51} Chevalley C.: Th\'{e}orie des groupes de Lie. Tome II. Groupes alg\'{e}briques. (French) 
%Actualit\'{e}s Sci. Ind. no. 1152. Hermann, Paris, vii+189 pp, 1951.

\bibitem{CNP93} Cohn H., Nerman O., Peligrad M.:  Weak ergodicity and products of random matrices.
 \emph{Journal of Theoretical Probability}, 6(2): 389-405, 1993.


%\bibitem{Cra38} Cram\'{e}r H.: Sur un nouveau th\'{e}oreme-limite de la th\'{e}orie des probabilit\'{e}s.
%   \emph{Actual. Sci. Ind.},  736: 5-23, 1938. 


%\bibitem{CDM17} Cuny C., Dedecker J., Merlev\`{e}de F., 
%Large and moderate deviations for the left random walk on GL(d,$\bb{R}$). 
%\emph{ALEA, Lat. Am. J. Probab. Math. Stat.}, 14: 503–527, 2017. 


\bibitem{CDMP21} Cuny C., Dedecker J., Merlev\`ede F., Peligrad M.: 
Berry-Esseen type bounds for the left random walk on $GL_d(\bb R)$ under polynomial moment conditions. 
To appear in \emph{The Annals of Probability}. 
hal-03329189, 2021. 

\bibitem{CDMP21b} Cuny C., Dedecker J., Merlev\`ede F., Peligrad M.: 
Berry-Esseen type bounds for the matrix coefficients and the spectral radius of the left random walk on $GL_d(\bb R)$.
\emph{Comptes Rendus Mathématique}, 360: 475-482, 2022.
 % arXiv:2110.10937, 2021.

%\bibitem{CPV93} Crisanti A., Paladin G., Vulpiani A.: Products of random matrices: in Statistical Physics. 
% \emph{Springer Science and Business Media}, 1993.
 
 
\bibitem{DKW21} Dinh T. C., Kaufmann L., Wu H.: 
Berry-Esseen bound and local limit theorem for the coefficients of products of random matrices. 
 arXiv:2110.09032, 2021.


\bibitem{DKW21b}  Dinh T. C., Kaufmann L., Wu H.: 
Berry-Esseen bounds with targets and Local Limit Theorems for products of random matrices. 
\emph{arXiv preprint} arXiv:2111.14109v2, 2021. 


\bibitem{Fur63} Furstenberg H.: Noncommuting random products. 
\emph{Transactions of the American Mathematical Society}, 108(3): 377-428, 1963.

\bibitem{FK60} Furstenberg H., Kesten H.: Products of random matrices. 
\emph{The Annals of Mathematical Statistics}, 31(2): 457-469, 1960.


\bibitem{GM89} Goldsheid I. Y., Margulis G. A.: Lyapunov indices of a product of random matrices. 
 \emph{Uspekhi Matematicheskikh Nauk}, 44(5): 11-71, 1989. 
 
%\bibitem{GK21}  Gorodetski, A., Kleptsyn, V.: Parametric Furstenberg Theorem on random products of $SL(2, \bb R)$ matrices. 
%\emph{Advances in Mathematics}, 378: 107522, 2021. 


%\bibitem{GLL17} Grama I., Lauvergnat R., Le Page \'{E}.: Conditioned local limit theorems 
% for random walks defined on finite Markov chains. 
% \emph{Probab. Theory Related Fields}, 176(1-2): 669-735, 2020. 


%\bibitem{GLP20a} Grama I., Liu Q., Pin E.:
%Berry-Esseen's bound and harmonic moments for supercritical multi-type branching processes in random environments.
%\emph{Preprint} hal-02911865, 2020. 

 
\bibitem{GQX20} Grama I., Quint J.-F., Xiao H.: 
A zero-one law for invariant measures and a local limit theorems for the coefficients of random walks on the general linear group.
to appear in \emph{Annales de l'Institut Henri Poincar\'{e}, Probabilit\'{e}s et Statistiques}, 2021+.

%\bibitem{GX21}  Grama I., Xiao H.: Conditioned local limit theorems for random walks on the real line. 
%\emph{arXiv preprint} arXiv:2110.05123, 2021.

%\bibitem{Gou09} Gou\"{e}zel S.: Local limit theorem for nonuniformly partially hyperbolic skew-products and Farey sequences. 
%\emph{Duke Math. J.}, 147(2): 193-284, 2009. 

%\bibitem{Gou14} Gou\"{e}zel S.: Local limit theorem for symmetric random walks in Gromov-hyperbolic groups. 
%\emph{J. Amer. Math. Soc.}, 27(3): 893-928, 2014. 

\bibitem{GK20} Gou\"{e}zel S., Karlsson A.: Subadditive and multiplicative ergodic theorems. 
\emph{Journal of the European Mathematical Society}, 22(6): 1893-1915, 2020. 

\bibitem{Gui80} Guivarc'h Y.: Exposants de Liapunoff des marches aléatoires à pas markovien. 
 \emph{Publications mathématiques et informatique de Rennes 1}: 1-16, 1980. 

%\bibitem{Gui90} Guivarc'h Y.: Produits de matrices al\'{e}atoires et applications 
%aux propri\'{e}t\'{e}s g\'{e}om\'{e}triques des sous-groupes du groupe lin\'{e}aire. 
%\emph{Ergodic Theory Dynam. Systems}, 10(3): 483-512, 1990.
	
\bibitem{GL16} Guivarc'h Y., Le Page \'{E}.: 
  Spectral gap properties for linear random walks and Pareto's asymptotics for affine stochastic recursions. 
 \emph{Ann. Inst. Henri Poincar\'{e} Probab. Stat.}, 52(2): 503-574, 2016.

\bibitem{GR85} Guivarc'h Y., Raugi A.: Frontiere de Furstenberg, propri\'{e}t\'{e}s de contraction et 
  th\'{e}or\`{e}mes de convergence. 
\emph{Probab. Theory Related Fields}, 69(2): 187-242, 1985.


\bibitem{Hen97} Hennion H.: Limit theorems for products of positive random matrices. 
\emph{The Annals of Probability}, 25(4): 1545-1587, 1997.

\bibitem{HH01} Hennion H., Herv\'{e} L.: Limit theorems for Markov chains and stochastic properties of 
dynamical systems by quasi-compactness. Vol. 1766. \emph{Springer-Verlag}, Berlin, 2001.


%\bibitem{HS17} Hochman, M., Solomyak, B.: On the dimension of Furstenberg measure for $SL(2, \mathbb{R})$ random matrix products. 
%\emph{Inventiones mathematicae}, 210(3): 815-875, 2017. 

%\bibitem{HL12} Huang C., Liu Q.: Moments, moderate and large deviations for a branching process in a random environment. 
%  \emph{Stochastic Processes and their Applications}, 122(2): 522-545, 2012.
 

%\bibitem{Jir20} Jirak M.: A Berry-Esseen bound with (almost) sharp dependence conditions. \emph{arXiv preprint} arXiv:2008.07332, 2020.


%\bibitem{Kes73} Kesten H., Random difference equations and renewal theory for products of random matrices. 
%    \emph{Acta Mathematica}, 131(1): 207-248, 1973.

\bibitem{Kin73} Kingman J. F. C.: Subadditive ergodic theory. \emph{Ann. Probab.}, 883-899, 1973.


\bibitem{LeP82} Le Page \'{E}.: Th\'{e}or\`{e}mes limites pour les produits de matrices al\'{e}atoires. 
 \emph{In Probability measures on groups}. Springer Berlin Heidelberg, 258-303, 1982.
 
%\bibitem{LPP18} Le Page E., Peign\'{e} M., Pham C.: The survival probability of 
%  a critical multi-type branching process in iid random environment. 
% \emph{The Annals of Probability}, 46(5): 2946-2972, 2018. 
 
%\bibitem{Men16} Mentemeier S.: The fixed points of the multivariate smoothing transform. 
%\emph{Probability Theory and Related Fields}, 164(1-2): 401-458, 2016. 


%\bibitem{PT08} Pelander, A., Teplyaev, A.: Products of random matrices and derivatives on p.c.f. fractals. 
%\emph{Journal of Functional Analysis}, 254(5): 1188-1216, 2008. 


\bibitem{Pet75} Petrov V. V: Sums of independent random variables. \emph{Springer}, 1975. 
     
\bibitem{XGL19b} Xiao H., Grama I., Liu Q.: Berry-Esseen bound and precise moderate deviations for 
    products of random matrices. \emph{Journal of the European Mathematical Society}, 
    24(8): 2691-2750, 2022. 
    %to appear 2021+, arXiv:1907.02438.
    
%\bibitem{XGL21}  Xiao H., Grama I., Liu Q.: Berry–Esseen bounds and moderate deviations for random walks on $\textup{GL}_d(\bb R)$. 
%\emph{Stochastic Processes and their Applications},  142: 293-318, 2021. 
    
%\bibitem{XGL19c} Xiao H., Grama I., Liu Q.: 
%Berry-Esseen bounds and moderate deviations for the norm, entries and spectral radius of 
%products of positive random matrices. \emph{arXiv preprint} arXiv:2010.00557, 2020.  
  
\bibitem{XGL19d} Xiao H., Grama I., Liu Q.: 
Large deviation expansions for the coefficients of random walks on the general linear group.  
 arXiv:2010.00553, 2020. 

\bibitem{XGL21b} Xiao H., Grama I., Liu Q.: 
Edgeworth expansion and large deviations for the coefficients of products of positive random matrices.  
arXiv:2209.03158, 2022. 


\end{thebibliography}
\end{document}